\theoremstyle{plain}
\newtheorem{theorem}{Theorem}
\newtheorem{lemma}{Lemma}
\newtheorem{proposition}{Proposition}
\newtheorem{corollary}{Corollary}
\theoremstyle{definition}
\newtheorem{definition}{Definition}
\newtheorem{question}{Question}
\numberwithin{equation}{section}
\newtheorem{remark}{Remark}
\newcommand{\wustar}{\text{\ding{76}}}
\journal{}
\begin{document}

\begin{frontmatter}
\title{On lattice-ordered $t_{r}$-norms for type-2 fuzzy sets\tnoteref{mytitlenote}}
\tnotetext[mytitlenote]{This work was supported by the National Natural Science Foundation of China
(No. 11601449), the Science and Technology Innovation Team of Education Department of
Sichuan for Dynamical System and its Applications (No. 18TD0013), the Youth Science and Technology
Innovation Team of Southwest Petroleum University for Nonlinear Systems (No. 2017CXTD02),
the Key Natural Science Foundation of Universities in Guangdong Province (No. 2019KZDXM027),
and the National Natural Science Foundation of China (Key Program) (No. 51534006).}

\author[a1,a2,a3]{Xinxing Wu\corref{mycorrespondingauthor}}
\cortext[mycorrespondingauthor]{Corresponding author}
\address[a1]{School of Sciences, Southwest Petroleum University, Chengdu, Sichuan 610500, China}
\address[a2]{Institute for Artificial Intelligence, Southwest Petroleum University, Chengdu, Sichuan 610500, China}
\address[a3]{Zhuhai College of Jilin University, Zhuhai, Guangdong 519041, China}
\ead{wuxinxing5201314@163.com}

\author[a4]{Guanrong Chen}
\address[a4]{Department of Electrical Engineering, City University of Hong Kong,
Hong Kong SAR, China}
\ead{gchen@ee.cityu.edu.hk}


\begin{abstract}
In this paper, it is proved that, for the truth value algebra of interval-valued fuzzy sets, the distributive laws
do not imply the monotonicity condition for the set inclusion operation. Then, a lattice-ordered $t_{r}$-norm,
which is not the convolution of $t$-norms on $[0, 1]$, is obtained. These results negatively answer  two open
problems posed by Walker and Walker in \cite{WW2006}.
\end{abstract}
\begin{keyword}
Normal and convex function, $t$-norm, $t_{r}$-norm, $t_{lor}$-norm, type-2 fuzzy set, convolution.
\end{keyword}

\end{frontmatter}

\section{Introduction}

Throughout this paper, let $I=[0, 1]$, $I^{[2]}=\left\{[a, b]: 0\leq a\leq b\leq 1\right\}$,
and $Map(X, Y)$ be the set of all mappings from space $X$ to space $Y$. In particular, let $\mathbf{M}=Map(I, I)$.

To extend type-1 fuzzy sets (T1FSs), which are mappings from some universe to $I$,
and interval-valued fuzzy sets (IVFSs), which are mappings from some universe to $I^{[2]}$,
Zadeh~\cite{Z1975} introduced the notion of type-2 fuzzy sets (T2FSs) in 1975, which were then equivalently expressed in
different forms by Mendel et al. \cite{M2007,M2001,MJ2002}. Simply speaking, a T2FS is a mapping from a universe
to $Map(I, I)$.

\begin{definition}\cite{Z1965}
A {\it type-1 fuzzy set} $A$ in space $X$ is a mapping from $X$ to $I$, i.e.,
$A\in Map(X, I)$.
\end{definition}

\begin{definition}\cite{WW2005}
A {\it type-2 fuzzy set} $A$ in space $X$ is a mapping $A: X\rightarrow \mathbf{M}$,
i.e., $A\in Map(X, \mathbf{M})$.
\end{definition}

\begin{definition}\cite{WW2005}
A fuzzy set $A\in Map(X, I)$ is {\it normal} if $\sup\{A(x): x\in I\}=1$.
\end{definition}

\begin{definition}\cite{WW2005}
A function $f\in \mathbf{M}$ is {\it convex} if, for any $0\leq x\leq y\leq z\leq 1$, $f(y)\geq f(x)\wedge f(z)$.
\end{definition}

Let $\mathbf{N}$ and $\mathbf{L}$ denote the set of all normal functions in $\mathbf{M}$ and
the set of all normal and convex functions in $\mathbf{M}$, respectively.

For any subset $B$ of $X$, a special fuzzy set $\bm{1}_{B}$, called the {\it characteristic function}
of $B$, is defined by
$$
\bm{1}_{B}(x)=\begin{cases}
1, & x\in B,\\
0, & x\in X\backslash B.
\end{cases}
$$
Let
$\mathbf{J}=\{\bm{1}_{\{x\}}: x\in I\}$ and $\mathbf{K}=\{\bm{1}_{[a, b]}: 0\leq a \leq b\leq 1\}$.

As an extension of the logic connective conjunction and disjunction in classical two-valued logic,
triangular norms ($t$-norms) with the neutral $1$ and triangular conorms ($t$-conorms) with the neutral $0$
on $I$ were introduced by Menger \cite{Me1942} and by Schweizer and Sklar \cite{SS1961},
respectively. The $t$-norms for binary operations on $I^{[2]}$ were introduced by Castillo et al. \cite{GWW1996}.

\begin{definition}\cite{KMP2000,SS1961}
A binary operation $\ast: I^{2}\rightarrow I$ is a {\it $t$-norm} on $I$ if it satisfies the following axioms:
\begin{enumerate}
  \item[(T1)] (commutativity) $x\ast y=y\ast x$ for $x, y\in I$;
  \item[(T2)] (associativity) $(x \ast y) \ast z=x\ast (y\ast z)$ for $x, y, z\in I$;
  \item[(T3)] (increasing) $\ast$ is increasing in each argument;
  \item[(T4)] (neutral element) $1\ast x=x\ast 1=x$ for $x\in I$.
\end{enumerate}
A binary operation $\ast: I^2\rightarrow I$ is a
{\it $t$-conorm} on $I$ if it satisfies axioms (T1), (T2), and (T3) above;
and axiom (T4'): $0\ast x=x\ast 0=x$ for $x\in I$.
\end{definition}

\begin{definition}\cite[Definition~2]{WW2006}\cite[Definition~8]{GWW1996}\label{Def-2}
A binary operation $\vartriangle: I^{[2]}\times I^{[2]} \longrightarrow I^{[2]}$ is a {\it $t$-norm} on $I^{[2]}$ if,
for any $\bm{x}, \bm{y}, \bm{z}\in I^{[2]}$ and any $a, b\in I$ with $a\leq b$,
the following hold:
\begin{itemize}
\item[(1)] $[1, 1] \vartriangle \bm{x}=\bm{x}$;
\item[(2)] $\bm{x}\vartriangle \bm{y}=\bm{y}\vartriangle \bm{x}$;
\item[(3)] $(\bm{x}\vartriangle \bm{y})\vartriangle \bm{z}=\bm{x} \vartriangle (\bm{y}\vartriangle \bm{z})$;
\item[(4)] $\bm{x}\vartriangle (\bm{y}\vee \bm{z})=(\bm{x}\vartriangle \bm{y})\vee (\bm{x}\vartriangle \bm{z})$;
\item[(5)] $\bm{x}\vartriangle (\bm{y}\wedge \bm{z})=(\bm{x}\vartriangle \bm{y})\wedge (\bm{x}\vartriangle \bm{z})$;
\item[(6)] $[0, 1]\vartriangle [a, b]=[0, b]$;
\item[(7)] $[a, a]\vartriangle [b, b]=[c, c]$ for some $c\in I$;
\end{itemize}
where $[x_1, y_1]\wedge [x_2, y_2]=[x_1\wedge x_2, y_1 \wedge y_2]$,
and $[x_1, y_1]\vee [x_2, y_2]=[x_1\vee x_2, y_1\vee y_2]$.
\end{definition}

Walker and Walker~\cite{WW2006} proved that every $t$-norm $\vartriangle$ on $I^{[2]}$ is of the form
$[x_1, y_1]$ $\vartriangle$$[x_2, y_2]$ $=[x_1\blacktriangle x_2, y_1\blacktriangle y_2]$ for some $t$-norm $\blacktriangle$ on $I$,
and they introduced the following two monotonicity conditions to replace the distributive
laws (4) and (5):
\begin{itemize}
\item[(4$^{\prime}$)] $\bm{x}\leq \bm{y}$ implies $\bm{x}\vartriangle \bm{z}\leq \bm{y} \vartriangle \bm{z}$;
\item[(5$^{\prime}$)] $\bm{x}\subseteq \bm{y}$ implies $\bm{x}\vartriangle \bm{z}\subseteq \bm{y} \vartriangle \bm{z}$.
\end{itemize}
Meanwhile, they posed the following open problem:

\begin{question}\label{Q-1}\cite{WW2006}
Whether or not conditions (4) and (5) in Definition~\ref{Def-2} imply condition (5$^{\prime}$)?
\end{question}


A general technique to construct new operations on $\mathbf{M}$ is convolution.
\begin{definition}\cite[Definition 1.3.3]{HWW2016}\cite[Definition 6]{WW2006}
Let $\circ$ and $\vartriangle$ be two binary operations defined on $X$ and $Y$, respectively,
and $\Box$ be an appropriate operation on $Y$. If $\circ$ is a surjection, define a binary
operation $\bullet$ on the set $Map(X, Y)$ by
$$
(f\bullet g)(x)=\Box \{f(y) \vartriangle g(z): y\circ z=x\}.
$$
This method of defining a binary operation on $Map(X, Y)$ is called {\it convolution}.
In particular, the convolution of a $t$-norm $\vartriangle$ on $I$ is the binary operation
$\blacktriangle$ on $\mathbf{M}$ defined by
$$
(f\blacktriangle g)(x)=\sup\{f(y)\wedge g(z): y\vartriangle z=x\}, \text{ for } f, g\in \mathbf{M}.
$$
\end{definition}

\begin{definition}\cite{HCT2015}\label{HCT-Def}
Let $\ast$ be a binary operation on $I$, $\vartriangle$ be a
$t$-norm on $I$, and $\triangledown$ be a $t$-conorm on $I$. Define the binary operations
$\curlywedge$ and $\curlyvee: \mathbf{M}^2\rightarrow \mathbf{M}$ as follows: for $f, g\in \mathbf{M}$,
\begin{equation}\label{O-1}
(f\curlywedge g)(x)=\sup\left\{f(y)\ast g(z): y\vartriangle z =x\right\},
\end{equation}
and
\begin{equation}\label{O-2}
(f\curlyvee g)(x)=\sup\left\{f(y)\ast g(z): y\ \triangledown\ z =x\right\}.
\end{equation}
\end{definition}


\begin{definition}\cite{WW2005}\label{Def-7}
The operations of $\sqcup$ (union), $\sqcap$ (intersection), $\neg$ (complementation) on $\mathbf{M}$ are
defined as follows: for $f, g\in \mathbf{M}$,
\begin{align*}
(f\sqcup g)(x)&=\sup\{f(y)\wedge g(z): y\vee z=x\},\\
(f\sqcap g)(x)&=\sup\{f(y)\wedge g(z): y\wedge z=x\},
\end{align*}
and
$$
(\neg f)(x)=\sup\{f(y): 1-y=x\}=f(1-x).
$$
\end{definition}

From \cite{WW2005}, it follows that $\mathbb{M}=(\mathbf{M}, \sqcup, \sqcap, \neg, \bm{1}_{\{0\}}, \bm{1}_{\{1\}})$
is not a lattice, as the absorption laws do not hold, although $\sqcup$ and $\sqcap$ satisfy the De Morgan's laws with respect
to the complementation $\neg$.

Walker and Walker \cite{WW2005} defined the following partial order on $\mathbf{M}$.
\begin{definition}\cite{WW2005}
$f\sqsubseteq g$ if $f\sqcap g=f$; $f\preceq g$ if $f\sqcup g=g$.
\end{definition}

It is noted that the same orders were introduced by Mizumoto and Tanaka~\cite{MT1976} for $Map(J, I)$,
in the case that $J$ is a subset of $I$. It follows from \cite[Proposition 14]{WW2005} that $\sqsubseteq$
and $\preceq$ are different partial orders on $\mathbf{M}$. However, $\sqsubseteq$ and $\preceq$ coincide on $\mathbf{L}$,
and the lattice $(\mathbf{L}, \sqsubseteq)$ is a bounded complete lattice (see \cite{WW2005,HWW2007}). In particular, $\bm{1}_{\{0\}}$ and
$\bm{1}_{\{1\}}$ are the minimum and maximum of $\mathbf{L}$, respectively.

\begin{definition}\cite{HCT2015,WW2006}\label{Def-1}
A binary operation $T: \mathbf{L}^2 \rightarrow \mathbf{L}$ is a {\it $t_{r}$-norm}
({\it $t$-norm according to the restrictive axioms}), if
\begin{itemize}
  \item[(O1)] $T$ is commutative, i.e., $T(f, g)=T(g, f)$ for $f, g\in \mathbf{L}$;
  \item[(O2)] $T$ is associative, i.e., $T(T(f, g), h)=T(f, T(g, h))$ for $f, g, h\in \mathbf{L}$;
  \item[(O3)] $T(f, \bm{1}_{\{1\}})=f$ for $f\in \mathbf{L}$ (neutral element);
  \item[(O4)] for $f, g, h\in \mathbf{L}$ such that $f\sqsubseteq g$,
  $T(f, h)\sqsubseteq T(g, h)$ (increasing in each argument);
  \item[(O5)] $T(\bm{1}_{[0, 1]}, \bm{1}_{[a, b]})=\bm{1}_{[0, b]}$;
  \item[(O6)] $T$ is closed on $\mathbf{J}$;
  \item[(O7)] $T$ is closed on $\mathbf{K}$.
\end{itemize}

A binary operation $S: \mathbf{L}^2\rightarrow \mathbf{L}$ is a
{\it $t_r$-conorm} if it satisfies axioms (O1), (O2), (O4), (O6), and (O7) above,
axiom (O3$^{'}$): $S(f, \bm{1}_{\{0\}})=f$, and axiom (O5$^{'}$):
$S(\bm{1}_{[0, 1]}, \bm{1}_{[a, b]})=\bm{1}_{[a, 1]}$. Axioms (O1), (O2), (O3), (O3$^{'}$),
and (O4) are called ``{\it basic axioms}", and an operation that complies with
these axioms will be referred to as {\it $t$-norm} and {\it $t$-conorm}, respectively.
\end{definition}

\begin{definition}\cite[Definition~5.2.6]{HWW2016}
A binary operation $R: \mathbf{L}^2\rightarrow \mathbf{L}$ is a
{\it lattice-ordered $t_{r}$-norm} (denoted as {\it $t_{lor}$-norm}) if it satisfies axioms
(O1), (O2), (O3), (O5), (O6), and (O7) above,
axiom (O4$^{'}$): $R(f, g\sqcup h)=R(f, g)\sqcup R(f, h)$, and axiom (O4$^{''}$):
$R(f, g\sqcap h)=R(f, g)\sqcap R(f, h)$.
\end{definition}

\begin{remark}
Recently, we \cite{WC-T-Norm} proved that $t_{lor}$-norm on $\mathbf{L}$ is strictly stronger
than ${t_r}$-norm on $\mathbf{L}$, which is strictly stronger than ${t}$-norm on $\mathbf{L}$.
\end{remark}

Walker and Walker~\cite{WW2006} proved that the convolution $\blacktriangle$ of each $t$-norm
$\vartriangle$ on $I$ is a $t_{lor}$-norm on $\mathbf{L}$ and they proposed the following question
in \cite{WW2006}.

\begin{question}\label{Q-2}\cite{WW2006}
Whether or not a $t_{lor}$-norm is indeed the convolution of a $t$-norm on $I$?
\end{question}

Hern\'{a}ndez~et al. \cite{HCT2015} proved that the binary operations
$\curlywedge$ and $\curlyvee$, defined in Definition \ref{HCT-Def}, are respectively a $t_r$-norm and
a $t_r$-conorm on $\mathbf{L}$, provided that $\vartriangle$ and
$\triangledown$ are continuous and $\ast$ is a continuous $t$-norm on $I$. Concerning its converse,
we \cite{WC-TFS,WC-T-Norm} showed that if the operation $\curlywedge$ defined in Definition
\ref{HCT-Def} is a $t_r$-norm on $\mathbf{L}$, then $\vartriangle$ is continuous
and $\ast$ is a $t$-norm on $I$, and we also obtained a similar result for $\curlyvee$. Meanwhile, we \cite{WC-TFS-2}
constructed a $t_r$-norm and a $t_r$-conorm on $\mathbf{L}$, which cannot be obtained by the
formulas that define the operations `$\curlywedge$' and `$\curlyvee$'.

Extending our construction method in \cite{WC-TFS-2}, this paper is devoted to answering Questions~\ref{Q-1} and \ref{Q-2}.
In Section~\ref{S-iii}, we construct a binary operation $\circledast$ on $I^{[2]}$ satisfying conditions (4)
and (5) in Definition~\ref{Def-2}, which does not satisfy condition (5$^{\prime}$). In Sections~\ref{S-iV} and \ref{S-V},
we obtain a $t_{lor}$-norm $\wustar$, which is not the convolution of each $t$-norm on $I$. These two results negatively answer Questions
\ref{Q-1} and \ref{Q-2}.

\section{Some basic properties of $\mathbf{L}$}

\begin{definition}\cite{WW2005,WC-TFS-2}
For $f\in \mathbf{M}$, define 
\begin{align*}
f^{L}(x)&=\sup\left\{f(y): y\leq x\right\},\\
f^{L_{\mathrm{w}}}(x)&=\begin{cases}
\sup\{f(y): y< x\}, & x\in (0, 1], \\
f(0), & x=0,
\end{cases}
\end{align*}
and
\begin{align*}
f^{R}(x)&=\sup\left\{f(y): y\geq x\right\},\\
f^{R_{\mathrm{w}}}(x)&=\begin{cases}
\sup\{f(y): y> x\}, & x\in [0, 1), \\
f(1), & x=1.
\end{cases}
\end{align*}
\end{definition}
Clearly, (1) $f^L$, $f^{L_{\mathrm{w}}}$ and $f^R$, $f^{R_{\mathrm{w}}}$ are monotonically increasing and
decreasing, respectively; (2) $f^{L}(x)\vee f^{R}(x)=f^{L}(x)\vee f^{R_{\mathrm{w}}}(x)=
f^{R}(x)\vee f^{L_{\mathrm{w}}}(x)=\sup_{x\in I}\{f(x)\}$ for all $x\in I$. The following properties of $f^{L}$
and $f^{R}$ are obtained by Walker {\it et al.} \cite{WW2005}.

\begin{proposition}{\rm \cite{WW2005}}\label{WW-L-function}
For $f, g\in \mathbf{M}$,
\begin{enumerate}[{\rm (1)}]
  \item $f\leq f^{L}\wedge f^{R}$;
  \item $(f^{L})^{L}=f^{L}$, $(f^{R})^{R}=f^{R}$;
  \item $(f^{L})^{R}=(f^{R})^{L}=\sup_{x\in I} \{f(x)\}$;
  \item $f\sqsubseteq g$ if and only if $f^{R}\wedge g \leq f\leq g^{R}$;
  \item $f\preceq g$ if and only if $f\wedge g^{L}\leq g\leq f^{L}$;
  \item $f$ is convex if and only if $f=f^{L}\wedge f^{R}$.
\end{enumerate}
\end{proposition}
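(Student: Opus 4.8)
The plan is to treat the six items in three groups, since (1)--(3) fall straight out of the definitions, (4)--(5) reduce to pointwise inequalities by an order-dual argument, and (6) carries the real content. For (1), the point $x$ itself satisfies both $x\le x$ and $x\ge x$, so $f(x)$ occurs in each defining supremum, giving $f(x)\le f^{L}(x)$ and $f(x)\le f^{R}(x)$. For (2), since $f^{L}$ is increasing, $\sup\{f^{L}(y):y\le x\}$ is attained at $y=x$, whence $(f^{L})^{L}=f^{L}$, and the claim for $f^{R}$ is symmetric. For (3), monotonicity of $f^{L}$ gives $\sup\{f^{L}(y):y\ge x\}=f^{L}(1)=\sup_{t}f(t)$ independently of $x$, and dually $(f^{R})^{L}(x)=f^{R}(0)=\sup_{t}f(t)$.

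For (4) and (5) I would first make the convolutions explicit. The constraint $y\wedge z=x$ forces one argument to equal $x$ and the other to be $\ge x$, so
\[
(f\sqcap g)(x)=\bigl(f(x)\wedge g^{R}(x)\bigr)\vee\bigl(f^{R}(x)\wedge g(x)\bigr),
\]
and dually $(f\sqcup g)(x)=(f(x)\wedge g^{L}(x))\vee(f^{L}(x)\wedge g(x))$. For (4), the equation $f\sqcap g=f$ then reads $(f\wedge g^{R})\vee(f^{R}\wedge g)=f$ pointwise. Since $f\wedge g^{R}\le f$ automatically, sufficiency of $f^{R}\wedge g\le f\le g^{R}$ is immediate: the two hypotheses force the first disjunct to equal $f$ and the second to stay below $f$. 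For necessity, $f\sqcap g=f$ at once yields $f^{R}\wedge g\le f$; to obtain $f\le g^{R}$ I would argue by contradiction, assuming $f(x_{0})>g^{R}(x_{0})$ at some point and using $g\le g^{R}$ (item (1) applied to $g$) to show that both disjuncts fall strictly below $f(x_{0})$, contradicting the equality. Item (5) follows by the order-dual argument with $g\le f^{L}$ in place of $f\le g^{R}$.

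The heart of the proposition is (6), and this is where I expect the main obstacle. By (1) it suffices, for the forward implication, to prove $f^{L}(x)\wedge f^{R}(x)\le f(x)$ whenever $f$ is convex. I would argue contrapositively: if both $f^{L}(x)>f(x)$ and $f^{R}(x)>f(x)$, then the two suprema exceeding $f(x)$ produce points $y_{0}<x<z_{0}$ with $f(y_{0})>f(x)$ and $f(z_{0})>f(x)$, the strict separation $y_{0}\ne x\ne z_{0}$ being forced since $f(x)\not>f(x)$; convexity applied to $y_{0}\le x\le z_{0}$ then gives $f(x)\ge f(y_{0})\wedge f(z_{0})>f(x)$, a contradiction. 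Hence at each $x$ at least one of $f^{L}(x),f^{R}(x)$ is $\le f(x)$, i.e.\ $f^{L}(x)\wedge f^{R}(x)\le f(x)$. The converse is a direct monotonicity computation: if $f=f^{L}\wedge f^{R}$ and $x\le y\le z$, then $f^{L}(y)\ge f(x)$ and $f^{R}(y)\ge f(z)$, so $f(y)=f^{L}(y)\wedge f^{R}(y)\ge f(x)\wedge f(z)$, which is exactly convexity.

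The delicate point throughout is that the suprema defining $f^{L}$ and $f^{R}$ need not be attained, so every step must be phrased to avoid selecting maximizers. In (6) this is handled cleanly by passing to the contrapositive, where one only needs the \emph{existence} of witnesses strictly exceeding $f(x)$; in (4)--(5) it is handled by comparing values against the monotone envelopes $g^{R}$ and $f^{L}$ rather than against specific witnesses, which is precisely why the characterizations are stated in terms of $f^{R}\wedge g\le f\le g^{R}$ and its dual.
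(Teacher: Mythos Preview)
Your proof is correct in all six parts. Note, however, that the paper does not supply its own proof of this proposition: it is quoted verbatim from Walker and Walker~\cite{WW2005} and used as background, so there is no in-paper argument to compare against. Your treatment---deriving the explicit pointwise formula $(f\sqcap g)(x)=(f(x)\wedge g^{R}(x))\vee(f^{R}(x)\wedge g(x))$ and its dual, then reading off (4)--(5), and handling (6) by the contrapositive witness argument---is the standard route taken in \cite{WW2005}; the only point worth flagging is that the passage from $\sup_{z\ge x}\bigl(f(x)\wedge g(z)\bigr)$ to $f(x)\wedge g^{R}(x)$ silently uses that $[0,1]$ is completely distributive (a constant meets commute with arbitrary joins), which you might want to make explicit.
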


\begin{lemma}{\rm \cite{WW2005}}\label{L-R-Operation}
For $f, g\in \mathbf{L}$,
\begin{enumerate}[{\rm (i)}]
  \item $(f\sqcap g)^{L}=f^{L}\vee g^{L}$;
  \item $(f\sqcap g)^{R}=f^{R}\wedge g^{R}$;
  \item $(f\sqcup g)^{L}=f^{L}\wedge g^{L}$;
  \item $(f\sqcup g)^{R}=f^{R}\vee g^{R}$.
\end{enumerate}
\end{lemma}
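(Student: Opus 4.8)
The plan is to prove all four identities by a single two-step recipe: first collapse the nested suprema coming from the definitions of $\sqcup$, $\sqcap$ and of the operators $(\cdot)^{L}$, $(\cdot)^{R}$ into one supremum over a constrained index set, and then evaluate that supremum using one of two elementary facts, according to whether the resulting constraint is conjunctive or disjunctive. A useful preliminary observation is that none of the identities will actually need convexity; normality of $f$ and $g$ is the only structural input.

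First I would carry out the collapsing step. Unwinding the definitions gives, for example,
\[
(f\sqcup g)^{R}(x)=\sup_{w\geq x}(f\sqcup g)(w)=\sup\{f(y)\wedge g(z): y\vee z\geq x\},
\]
and the key point is that the constraint $y\vee z\geq x$ is the \emph{disjunction} ``$y\geq x$ or $z\geq x$''. Performing the analogous rewriting in the other three cases, one finds that $(f\sqcap g)^{R}$ carries the \emph{conjunctive} constraint ``$y\geq x$ and $z\geq x$''; that $(f\sqcup g)^{L}$ carries ``$y\leq x$ and $z\leq x$''; and that $(f\sqcap g)^{L}$ carries the disjunctive constraint ``$y\leq x$ or $z\leq x$''. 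Thus (ii) and (iii) reduce to conjunctive suprema, while (i) and (iv) reduce to disjunctive ones.

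For the two conjunctive cases I would invoke the elementary identity $\sup_{y\in Y,\,z\in Z}\big(f(y)\wedge g(z)\big)=\big(\sup_{Y}f\big)\wedge\big(\sup_{Z}g\big)$, which holds because $y$ and $z$ range independently: the $\leq$ direction is immediate, and the $\geq$ direction follows by choosing $y_{0},z_{0}$ that simultaneously approximate the two suprema. Applying this with $Y=Z=\{w:w\geq x\}$ yields (ii), and with $Y=Z=\{w:w\leq x\}$ yields (iii); neither invocation uses normality.

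The main obstacle, and the only place where membership in $\mathbf{L}$ is used, is the disjunctive pair (i) and (iv), so I would treat these carefully by proving the two inequalities separately. The $\leq$ direction is routine: if $y\geq x$ then $f(y)\leq f^{R}(x)$, and if $z\geq x$ then $g(z)\leq g^{R}(x)$, so in either case $f(y)\wedge g(z)\leq f^{R}(x)\vee g^{R}(x)$. The $\geq$ direction is the delicate point: to show $f^{R}(x)\leq (f\sqcup g)^{R}(x)$, I would fix any $y\geq x$ and, using $\sup_{z}g(z)=1$ (normality of $g$), pick $z$ with $g(z)$ arbitrarily close to $1$; since $y\geq x$ already satisfies the disjunctive constraint, $f(y)\wedge g(z)$ can be made arbitrarily close to $f(y)$, whence $f(y)\leq (f\sqcup g)^{R}(x)$, and taking the supremum over $y\geq x$ gives $f^{R}(x)\leq(f\sqcup g)^{R}(x)$. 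The symmetric argument using normality of $f$ gives $g^{R}(x)\leq(f\sqcup g)^{R}(x)$, and together these establish (iv); the completely symmetric argument, with $\sqcup$, $(\cdot)^{R}$, and $\geq$ replaced by $\sqcap$, $(\cdot)^{L}$, and $\leq$, then gives (i).
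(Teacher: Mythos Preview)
Your proof is correct. The paper does not actually prove this lemma; it simply cites it from \cite{WW2005} as a known result, so there is no ``paper's own proof'' to compare against. Your argument---collapsing the nested suprema, identifying the constraint as conjunctive for (ii), (iii) and disjunctive for (i), (iv), then handling the conjunctive cases with the rectangle identity $\sup_{Y\times Z}(f\wedge g)=(\sup_{Y}f)\wedge(\sup_{Z}g)$ and the disjunctive cases via two inequalities with normality supplying the nontrivial direction---is the standard elementary route and is sound. Your side observation that convexity is never used (and that even normality is unneeded for (ii) and (iii)) is also correct; the lemma as stated for $\mathbf{L}$ is a specialization of identities that hold on all of $\mathbf{N}$ (or $\mathbf{M}$, for the conjunctive pair).
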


\begin{remark}\label{remark-1}
From Lemma~\ref{L-R-Operation}, it follows that, for $f, g\in \mathbf{L}$,
$(f\sqcap g)(1)=(f\sqcap g)^{R}(1)=f(1)\wedge g(1)$ and $(f\sqcup g)(1)=(f\sqcup g)^{R}(1)
=f(1)\vee g(1)$.
\end{remark}

%

\begin{theorem}{\rm \cite{HWW2010,HWW2008}}\label{order-theorem}
Let $f, g\in \mathbf{L}$. Then, $f\sqsubseteq g$ if and only if
$g^L\leq f^L$ and $f^R\leq g^R$.
\end{theorem}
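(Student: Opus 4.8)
The plan is to use Proposition~\ref{WW-L-function}(4), which characterizes the order as $f\sqsubseteq g\iff f^{R}\wedge g\le f\le g^{R}$, as the bridge between the definition of $\sqsubseteq$ and the two envelope inequalities $g^{L}\le f^{L}$ and $f^{R}\le g^{R}$. Throughout I would use three facts freely: the operators $(\cdot)^{L}$ and $(\cdot)^{R}$ are monotone for the pointwise order (immediate from their definition as suprema); every $f\in\mathbf{L}$ is convex, so $f=f^{L}\wedge f^{R}$ by Proposition~\ref{WW-L-function}(6); and every $f\in\mathbf{L}$ is normal, so $\sup_{x}f(x)=1$ and hence $f^{L}(x)\vee f^{R}(x)=1$ for all $x$ by the properties noted after the definition of $f^{L},f^{R}$.

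For the easy direction ($\Leftarrow$), I would assume $g^{L}\le f^{L}$ and $f^{R}\le g^{R}$. Then $f=f^{L}\wedge f^{R}\le f^{R}\le g^{R}$ gives the right-hand inequality of Proposition~\ref{WW-L-function}(4). For the left-hand one, convexity and the hypothesis give $g=g^{L}\wedge g^{R}\le g^{L}\le f^{L}$, whence $f^{R}\wedge g\le f^{R}\wedge f^{L}=f$. Thus $f^{R}\wedge g\le f\le g^{R}$ and $f\sqsubseteq g$.

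For the forward direction ($\Rightarrow$), I would assume $f^{R}\wedge g\le f\le g^{R}$. The inequality $f^{R}\le g^{R}$ is immediate: applying the monotone operator $(\cdot)^{R}$ to $f\le g^{R}$ and using idempotence $(g^{R})^{R}=g^{R}$ from Proposition~\ref{WW-L-function}(2) yields $f^{R}\le(g^{R})^{R}=g^{R}$. The inequality $g^{L}\le f^{L}$ is the crux, and I expect it to be the main obstacle: the naive move of applying $(\cdot)^{L}$ to $f^{R}\wedge g\le f$ produces $(f^{R}\wedge g)^{L}\le f^{L}$, but since $f^{R}\wedge g\le g$ one only obtains $(f^{R}\wedge g)^{L}\le g^{L}$, so the estimate runs the wrong way. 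Instead I would argue pointwise. Fixing $x$ and any $y\le x$, the relation $f^{R}\wedge g\le f$ together with $f(y)\le f^{L}(x)$ gives $f^{R}(y)\wedge g(y)\le f^{L}(x)$; the goal is to upgrade this to $g(y)\le f^{L}(x)$ and then take the supremum over $y\le x$. If $f^{L}(x)=1$ the bound is trivial. Otherwise normality forces $f^{R}(x)=1$, and since $(\cdot)^{R}$ is decreasing and $y\le x$ we get $f^{R}(y)\ge f^{R}(x)=1$, so $f^{R}(y)\wedge g(y)=g(y)$ and hence $g(y)\le f^{L}(x)$. Taking the supremum over all $y\le x$ yields $g^{L}(x)\le f^{L}(x)$ for every $x$, which completes the argument.
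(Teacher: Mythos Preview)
Your argument is correct. The paper does not supply its own proof of this theorem: it is quoted verbatim from \cite{HWW2010,HWW2008} and stated without proof, so there is no in-paper argument to compare against.

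Since a comparison is impossible, here is a brief check of your reasoning. The backward direction is clean: from $g^{L}\le f^{L}$ and $f^{R}\le g^{R}$ together with convexity $f=f^{L}\wedge f^{R}$ and $g=g^{L}\wedge g^{R}$, both inequalities in Proposition~\ref{WW-L-function}(4) follow exactly as you wrote. For the forward direction, the deduction $f^{R}\le g^{R}$ via monotonicity of $(\cdot)^{R}$ and idempotence $(g^{R})^{R}=g^{R}$ is correct. The pointwise argument for $g^{L}\le f^{L}$ is the right idea and is carried out properly: the dichotomy on whether $f^{L}(x)=1$ uses normality (so $f^{L}(x)\vee f^{R}(x)=1$) to force $f^{R}(y)=1$ for all $y\le x$ in the nontrivial case, collapsing $f^{R}(y)\wedge g(y)$ to $g(y)$ and giving $g(y)\le f(y)\le f^{L}(x)$. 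Taking the supremum over $y\le x$ finishes it. No gaps.
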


%

\begin{proposition}\label{Pro-L}
For $f\in \mathbf{M}$, it holds that $f^{L_{\mathrm{w}}}(x)=\sup_{t\in [0, x)}\{f^{L}(t)\}=\lim_{t\nearrow x}f^{L}(t)$ for all $x\in (0, 1]$.
\end{proposition}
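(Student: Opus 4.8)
The plan is to prove the two asserted equalities separately, handling the limit through the monotonicity of $f^{L}$ and the remaining identity by a direct double-supremum manipulation. Throughout I would use only the definitions of $f^{L}$ and $f^{L_{\mathrm{w}}}$ together with the observation, recorded immediately after the definition, that $f^{L}$ is monotonically increasing. Note also that, since $x\in(0,1]$, the index set $[0,x)$ is nonempty (it contains $0$), so all suprema below are taken over nonempty sets.

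First I would dispose of the equality $\sup_{t\in[0,x)}f^{L}(t)=\lim_{t\nearrow x}f^{L}(t)$. Since $f^{L}$ is increasing, the family $\{f^{L}(t)\}_{t<x}$ is nondecreasing in $t$ and bounded above (by $f^{L}(x)$, or simply by $1$), so its one-sided limit as $t\nearrow x$ exists and, by the standard characterization of left limits of monotone functions, equals $\sup_{t\in[0,x)}f^{L}(t)$. This step is essentially automatic and presents no difficulty.

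The substantive step is the identity $f^{L_{\mathrm{w}}}(x)=\sup_{t\in[0,x)}f^{L}(t)$, which I would establish by two inequalities. For the direction ``$\le$'', given any $y<x$ one has $y\in[0,x)$ and $f(y)\le f^{L}(y)\le\sup_{t\in[0,x)}f^{L}(t)$; taking the supremum over all $y<x$ yields $f^{L_{\mathrm{w}}}(x)\le\sup_{t\in[0,x)}f^{L}(t)$. For the direction ``$\ge$'', fix $t\in[0,x)$; for every $y\le t$ one has $y\le t<x$, hence $f(y)\le\sup\{f(y'):y'<x\}=f^{L_{\mathrm{w}}}(x)$, and taking the supremum over $y\le t$ gives $f^{L}(t)\le f^{L_{\mathrm{w}}}(x)$. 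Since $t\in[0,x)$ was arbitrary, this yields $\sup_{t\in[0,x)}f^{L}(t)\le f^{L_{\mathrm{w}}}(x)$, and the two inequalities combine to give equality.

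The only point requiring care---and the closest thing to an obstacle---is the interchange of suprema hidden in the ``$\ge$'' direction: one must verify that as $t$ ranges over $[0,x)$ and $y$ over $[0,t]$, the pairs $(t,y)$ realize exactly the set $\{y:0\le y<x\}$ that defines $f^{L_{\mathrm{w}}}(x)$. This is immediate once one notes that any $y<x$ is itself an admissible value of $t$ (take $t=y$), but it is worth stating explicitly so that the equality of the two nested suprema is justified rather than merely asserted.
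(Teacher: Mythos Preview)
Your proof is correct and follows the same two-inequality strategy as the paper. The only difference is in the ``$\ge$'' direction: the paper inserts the midpoint $\tfrac{t+x}{2}$ to obtain $f^{L}(t)\le f^{L_{\mathrm{w}}}\bigl(\tfrac{t+x}{2}\bigr)\le f^{L_{\mathrm{w}}}(x)$, whereas you bypass this by observing directly that $y\le t<x$ already gives $f(y)\le f^{L_{\mathrm{w}}}(x)$, which is slightly cleaner; you also make the limit equality explicit, while the paper leaves it implicit.
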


\begin{proof}
Fix any $x\in (0, 1]$, noting that $f(t)\leq f^{L}(t)$ for all $t\in [0, x)$, one has
$$
f^{L_{\mathrm{w}}}(x)=\sup_{t\in [0, x)}\{f(t)\}\leq\sup_{t\in [0, x)}\left\{f^{L}(t)\right\}.
$$

Meanwhile, for any $t\in [0, x)$, it follows from $t< \frac{t+x}{2}<x$ that
$f^{L}(t)\leq f^{L_{\mathrm{w}}}(\frac{t+x}{2})\leq f^{L_{\mathrm{w}}}(x)$.
This implies that
$$
\sup_{t\in [0, x)}\left\{f^{L}(t)\right\}\leq f^{L_{\mathrm{w}}}(x).
$$
Thus,
$$
f^{L_{\mathrm{w}}}(x)=\sup_{t\in [0, x)}\left\{f^{L}(t)\right\}.
$$
\end{proof}

\begin{corollary}\label{f^Lw}
Let $f, g\in \mathbf{L}$. Then, for any $x\in (0, 1]$,
\begin{itemize}
  \item[{\rm (1)}] $(f\sqcap g)^{L_{\mathrm{w}}}(x)=f^{L_{\mathrm{w}}}(x)\vee g^{L_{\mathrm{w}}}(x)$,
  \item[{\rm (2)}] $(f\sqcup g)^{L_{\mathrm{w}}}(x)=f^{L_{\mathrm{w}}}(x)\wedge g^{L_{\mathrm{w}}}(x)$.
\end{itemize}
\end{corollary}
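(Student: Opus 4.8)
The plan is to reduce both identities to the representation $f^{L_{\mathrm{w}}}(x)=\sup_{t\in[0,x)}\{f^{L}(t)\}$ furnished by Proposition~\ref{Pro-L}, and then to push the lattice operations $\vee$ and $\wedge$ through the supremum using the formulas for $(f\sqcap g)^{L}$ and $(f\sqcup g)^{L}$ supplied by Lemma~\ref{L-R-Operation}.

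For (1), I would first write, via Proposition~\ref{Pro-L}, $(f\sqcap g)^{L_{\mathrm{w}}}(x)=\sup_{t\in[0,x)}\{(f\sqcap g)^{L}(t)\}$, and then substitute $(f\sqcap g)^{L}=f^{L}\vee g^{L}$ from Lemma~\ref{L-R-Operation}(i). The remaining step is the elementary fact that a supremum distributes over a pointwise join, $\sup_{t}\{a_{t}\vee b_{t}\}=(\sup_{t}\{a_{t}\})\vee(\sup_{t}\{b_{t}\})$, valid for arbitrary families of reals. Applying Proposition~\ref{Pro-L} once more to each factor yields the claimed equality. This direction carries no real difficulty.

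The substance lies in (2). Starting as before, Proposition~\ref{Pro-L} and Lemma~\ref{L-R-Operation}(iii) give $(f\sqcup g)^{L_{\mathrm{w}}}(x)=\sup_{t\in[0,x)}\{f^{L}(t)\wedge g^{L}(t)\}$, so I must show that the supremum commutes with the pointwise meet. Unlike the join case, the inequality $\sup_{t}\{a_{t}\wedge b_{t}\}\leq(\sup_{t}\{a_{t}\})\wedge(\sup_{t}\{b_{t}\})$ is trivial, but the reverse fails for general families; this reverse direction is precisely where I expect the obstacle. The way around it is to exploit that $f^{L}$ and $g^{L}$ are both monotonically increasing (as recorded right after the definition of $f^{L}$). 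Concretely, denote the two suprema by $A$ and $B$ and assume without loss of generality $A\leq B$; given $\varepsilon>0$, choose $t_{0},t_{1}\in[0,x)$ with $f^{L}(t_{0})>A-\varepsilon$ and $g^{L}(t_{1})>A-\varepsilon$ (the latter is possible since $B\geq A$), and set $s=\max\{t_{0},t_{1}\}$. Monotonicity gives $f^{L}(s)\geq f^{L}(t_{0})>A-\varepsilon$ and $g^{L}(s)\geq g^{L}(t_{1})>A-\varepsilon$, hence $f^{L}(s)\wedge g^{L}(s)>A-\varepsilon$, so that $\sup_{t}\{f^{L}(t)\wedge g^{L}(t)\}\geq A=A\wedge B$. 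Letting $\varepsilon\to 0$ and combining with the trivial inequality closes the argument, and a final appeal to Proposition~\ref{Pro-L} rewrites $A\wedge B$ as $f^{L_{\mathrm{w}}}(x)\wedge g^{L_{\mathrm{w}}}(x)$.

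In summary, (1) is a formal computation, whereas (2) hinges on the single nontrivial observation that a supremum over a monotone (hence directed) family interchanges with a finite meet; the monotonicity of $f^{L}$ and $g^{L}$ is exactly the hypothesis that makes this interchange legitimate.
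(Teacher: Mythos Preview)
Your proof is correct and follows essentially the same route as the paper: both reduce via Proposition~\ref{Pro-L} and Lemma~\ref{L-R-Operation} to $\sup_{t\in[0,x)}\{f^{L}(t)\vee g^{L}(t)\}$ and $\sup_{t\in[0,x)}\{f^{L}(t)\wedge g^{L}(t)\}$, and both rely on the monotonicity of $f^{L},g^{L}$ for the nontrivial inequality in~(2). The only difference is cosmetic: for that inequality the paper takes a sequence $t_{n}\nearrow x$ and pigeonholes on which of $f^{L}(t_{n}),g^{L}(t_{n})$ is smaller infinitely often, whereas your $\varepsilon$--$\max\{t_{0},t_{1}\}$ argument reaches the same conclusion more directly.
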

\begin{proof}
(1) Applying Lemma~\ref{WW-L-function} and Proposition~\ref{Pro-L} yields that
\begin{align*}
(f\sqcap g)^{L_{\mathrm{w}}}(x)&=\sup_{t\in [0, x)}\{(f\sqcap g)^{L}(t)\}
=\sup_{0\leq t<x}\{f^{L}(t)\vee g^{L}(t)\}\\
&\geq \sup_{0\leq t<x}\{f(t)\vee g(t)\}
\geq f^{L_{\mathrm{w}}}(x)\vee g^{L_{\mathrm{w}}}(x).
\end{align*}
Clearly, $\sup_{0\leq t<x}\{f^{L}(t)\vee g^{L}(t)\}\leq f^{L_{\mathrm{w}}}(x)\vee g^{L_{\mathrm{w}}}(x)$.
Then,
$$
(f\sqcap g)^{L_{\mathrm{w}}}(x)=f^{L_{\mathrm{w}}}(x)\vee g^{L_{\mathrm{w}}}(x).
$$

(2) Applying Lemma~\ref{WW-L-function} and Proposition~\ref{Pro-L} yields that
\begin{equation}\label{eq-Cor-1}
(f\sqcup g)^{L_{\mathrm{w}}}(x)=\sup_{t\in [0, x)}\{(f\sqcup g)^{L}(t)\}
=\sup_{0\leq t<x}\{f^{L}(t)\wedge g^{L}(t)\}\leq f^{L_{\mathrm{w}}}(x)\wedge g^{L_{\mathrm{w}}}(x).
\end{equation}
Let $\sup_{0\leq t<x}\{f^{L}(t)\wedge g^{L}(t)\}=\xi$.
Since $f^{L}\wedge g^{L}$ is increasing, it follows that, for $t_n\nearrow x$,
$f^{L}(t_n)\wedge g^{L}(t_n)\nearrow \xi$. Set $\mathscr{P}=\{n\in \mathbb{N}: f^{L}(t_n)\leq g^{L}(t_n)\}$
and $\mathscr{Q}=\{n\in \mathbb{N}: f^{L}(t_n)> g^{L}(t_n)\}$. Clearly, either $\mathscr{P}$ or $\mathscr{Q}$
is infinite.
\begin{itemize}
\item[(a)] If $\mathscr{P}$ is infinite, then
$$
\xi=\lim_{\mathscr{P}\ni n\to +\infty}f^{L}(t_n)\wedge g^{L}(t_n)=\lim_{\mathscr{P}\ni n\to +\infty}f^{L}(t_n)
=f^{L_{\mathrm{w}}}(x)\geq f^{L_{\mathrm{w}}}(x)\wedge g^{L_{\mathrm{w}}}(x),
$$
which, together with \eqref{eq-Cor-1}, implies that
$$
(f\sqcup g)^{L_{\mathrm{w}}}(x)=f^{L_{\mathrm{w}}}(x)\wedge g^{L_{\mathrm{w}}}(x).
$$
\item[(b)] If $\mathscr{Q}$ is infinite, then
$$
\xi=\lim_{\mathscr{Q}\ni n\to +\infty}f^{L}(t_n)\wedge g^{L}(t_n)=\lim_{\mathscr{Q}\ni n\to +\infty}g^{L}(t_n)
=g^{L_{\mathrm{w}}}(x)\geq f^{L_{\mathrm{w}}}(x)\wedge g^{L_{\mathrm{w}}}(x),
$$
which, together with \eqref{eq-Cor-1}, implies that
$$
(f\sqcup g)^{L_{\mathrm{w}}}(x)=f^{L_{\mathrm{w}}}(x)\wedge g^{L_{\mathrm{w}}}(x).
$$
\end{itemize}
\end{proof}

For $f\in \mathbf{L}$ and $\alpha\in I$, let $L_{\alpha}(f)=\inf\{x\in I: f^{L}(x)\geq \alpha\}$
and $R^{\alpha}(f)=\sup\{x\in I: f^{R}(x)\geq \alpha\}$.

\begin{lemma}\label{<-Lemma}{\rm \cite[Lemma~2]{WC-TFS-2}}
For $f\in \mathbf{N}$, $L_{1}(f)\leq R^{1}(f)$.
\end{lemma}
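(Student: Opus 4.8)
The plan is to argue by contradiction, exploiting the elementary identity $f^{L}(x)\vee f^{R}(x)=\sup_{t\in I}f(t)$ recorded immediately after the definitions of $f^{L}$ and $f^{R}$. Since $f\in\mathbf{N}$ is normal, the right-hand side equals $1$, so at every point $x$ at least one of $f^{L}(x)$, $f^{R}(x)$ must equal $1$. This single observation is the engine of the whole argument.

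First I would check that both quantities are well defined. Because $f^{L}$ is increasing with $f^{L}(1)=\sup_{t}f(t)=1$, the set $\{x\in I: f^{L}(x)\geq 1\}$ is a nonempty up-set, whence $L_{1}(f)=\inf\{x: f^{L}(x)\geq 1\}$ exists; dually, $f^{R}$ is decreasing with $f^{R}(0)=1$, so $\{x: f^{R}(x)\geq 1\}$ is a nonempty down-set and $R^{1}(f)=\sup\{x: f^{R}(x)\geq 1\}$ exists.

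Next, writing $a=L_{1}(f)$ and $b=R^{1}(f)$, I would suppose toward a contradiction that $a>b$ and choose any $x$ with $b<x<a$. Since $\{x: f^{L}(x)\geq 1\}$ is an up-set with infimum $a$, the point $x<a$ lies outside it, so $f^{L}(x)<1$; since $\{x: f^{R}(x)\geq 1\}$ is a down-set with supremum $b$, the point $x>b$ lies outside it, so $f^{R}(x)<1$. Then $f^{L}(x)\vee f^{R}(x)<1=\sup_{t}f(t)$, contradicting the join identity. Hence $a\leq b$, that is, $L_{1}(f)\leq R^{1}(f)$.

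The only delicate point is the deduction of the strict inequalities $f^{L}(x)<1$ and $f^{R}(x)<1$ for $b<x<a$: this is exactly where the monotonicity of $f^{L}$ and $f^{R}$ is used, guaranteeing that the level sets $\{f^{L}\geq 1\}$ and $\{f^{R}\geq 1\}$ form an up-set and a down-set respectively, so that a point strictly below the infimum of the former and strictly above the supremum of the latter is excluded from both. Everything else is a direct application of normality together with the join identity, so I do not anticipate any genuine obstacle beyond stating this monotonicity bookkeeping carefully.
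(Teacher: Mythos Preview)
Your argument is correct. The paper itself does not supply a proof of this lemma but merely cites it from \cite[Lemma~2]{WC-TFS-2}; your contradiction argument via the identity $f^{L}(x)\vee f^{R}(x)=\sup_{t\in I}f(t)=1$ (recorded in the paper right after the definitions of $f^{L}$ and $f^{R}$) is the natural route and almost certainly coincides with the proof in the cited source.
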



\begin{definition}
For $f\in \mathbf{M}$, let
$$
b_{f}=\sup\{x\in I: f(x)=f^{L}(x)\} \text{ and } c_{f}=\inf\{x\in I: f(x)=f^{R}(x)\}.
$$
The points $b_{f}$ and $c_{f}$ are the {\it left balance point} of $f$ and
{\it right balance point} of $f$, respectively. In \cite{HWW2007}, the point $b_{f}$ is also called
{\it balance point} of $f$.
\end{definition}

\begin{proposition}
For $f\in \mathbf{L}$, $b_{f}=R^{1}(f)$ and $L_{1}(f)=c_{f}$.
\end{proposition}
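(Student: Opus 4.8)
The plan is to reduce both identities to a single structural fact about normal convex functions: at every point $x$, one of $f^{L}(x)$, $f^{R}(x)$ already attains the global supremum $1$. First I would recall from Proposition~\ref{WW-L-function} that $f\leq f^{L}\wedge f^{R}$ and that convexity of $f$ is equivalent to the equality $f=f^{L}\wedge f^{R}$. Since $f\in\mathbf{L}$ is convex, this gives $f(x)=f^{L}(x)\wedge f^{R}(x)$ for every $x\in I$, so that $f(x)=f^{L}(x)$ holds exactly when $f^{L}(x)\leq f^{R}(x)$, and $f(x)=f^{R}(x)$ holds exactly when $f^{R}(x)\leq f^{L}(x)$. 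This converts the two balance-point sets defining $b_{f}$ and $c_{f}$ into comparison sets between $f^{L}$ and $f^{R}$.

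Next I would invoke the normality of $f$, which gives $\sup_{x\in I}f(x)=1$ and hence the identity $f^{L}(x)\vee f^{R}(x)=1$ for all $x\in I$ (the second listed property of $f^{L}$ and $f^{R}$). Because $f^{R}(x)\leq 1$ always, the inequality $f^{L}(x)\leq f^{R}(x)$ forces $f^{R}(x)=1$, and conversely $f^{R}(x)=1$ clearly yields $f^{L}(x)\leq f^{R}(x)$; thus $\{x: f(x)=f^{L}(x)\}=\{x: f^{R}(x)=1\}=\{x: f^{R}(x)\geq 1\}$. Symmetrically, using $f^{L}(x)\leq 1$, one gets $\{x: f(x)=f^{R}(x)\}=\{x: f^{L}(x)=1\}=\{x: f^{L}(x)\geq 1\}$.

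Taking suprema and infima then finishes the argument: $b_{f}=\sup\{x: f^{R}(x)\geq 1\}=R^{1}(f)$ and $c_{f}=\inf\{x: f^{L}(x)\geq 1\}=L_{1}(f)$, which are exactly the two claimed equalities. Before concluding I would verify that the relevant sets are nonempty so that the supremum and infimum are meaningful: since $f^{L}(1)=\sup f=1$ and $f^{R}(0)=\sup f=1$, both $\{x: f^{L}(x)\geq 1\}$ and $\{x: f^{R}(x)\geq 1\}$ contain a point.

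There is no serious obstacle in this argument; the only places that need care are the handling of points where $f^{L}(x)=f^{R}(x)$, at which $x$ lies in both comparison sets simultaneously without causing any inconsistency, and the elementary observation that $f^{R}, f^{L}\leq 1$, which is precisely what permits replacing the strict equality ``$=1$'' by ``$\geq 1$'' so as to match the definitions of $R^{1}(f)$ and $L_{1}(f)$.
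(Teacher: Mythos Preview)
Your argument is correct and is essentially the same as the paper's: both proofs use convexity to write $f=f^{L}\wedge f^{R}$ and normality to get $f^{L}(x)\vee f^{R}(x)=1$, then combine these to show $\{x:f(x)=f^{L}(x)\}=\{x:f^{R}(x)=1\}$ (and symmetrically for the other identity). The only cosmetic difference is that you phrase the set equality via an iff chain whereas the paper verifies the two inclusions separately.
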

\begin{proof}
Let $A=\{x\in I: f^{R}(x)=1\}$ and $B=\{x\in I: f(x)=f^{L}(x)\}$. For $x\in A$, one has $f(x)=f^{L}(x)\wedge f^{R}(x)=f^{L}(x)$,
implying that $A\subset B$. For $x\in B$, one has $f(x)=f^{L}(x)=f^{L}(x)\wedge f^{R}(x)$, implying that $f^{R}(x)\geq f^{L}(x)$.
This, together with the normality of $f$, implies that $1=f^{L}(x)\vee f^{R}(x)=f^{R}(x)$, which means that $B\subset A$.
Thus, $b_{f}=\sup A=\sup B=R^{1}(f)$. $L_{1}(f)=c_{f}$ can be verified similarly.
\end{proof}

\begin{proposition}\label{f^l--f^r=1}
Let $f\in \mathbf{N}$. Then,
\begin{enumerate}[{\rm (1)}]
\item $f^{L}(x)=1$ for $x\in \left(L_{1}(f), 1\right]$,
\item $f^{R}(x)=1$ for $x\in \left[0, R^{1}(f)\right)$.
\end{enumerate}
\end{proposition}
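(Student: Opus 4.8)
The plan is to exploit the monotonicity of $f^{L}$ and $f^{R}$ together with the normality of $f$. Recall that $f^{L}$ is increasing and $f^{R}$ is decreasing (noted just after their definition), and that both are bounded above by $1$ since $f\in \mathbf{M}=Map(I, I)$. Normality, i.e. $\sup_{x\in I}f(x)=1$, is what guarantees that the level sets $\{x: f^{L}(x)=1\}$ and $\{x: f^{R}(x)=1\}$ are nonempty, so that $L_{1}(f)$ and $R^{1}(f)$ are genuinely attained as an infimum and a supremum over nonempty sets.

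For part (1), I would first observe that $f^{L}(1)=\sup\{f(y): y\leq 1\}=\sup_{y\in I}f(y)=1$, so the set $A=\{x\in I: f^{L}(x)\geq 1\}$ contains $1$ and hence $L_{1}(f)=\inf A$ is well-defined. Now fix any $x\in (L_{1}(f), 1]$. Since $x>\inf A$, there is some $x_{0}\in A$ with $x_{0}<x$, and for this point $f^{L}(x_{0})=1$. Because $f^{L}$ is increasing and $x>x_{0}$, one gets $f^{L}(x)\geq f^{L}(x_{0})=1$, and since $f^{L}\leq 1$ always, $f^{L}(x)=1$.

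Part (2) is the mirror image, using that $f^{R}$ is decreasing. Here $f^{R}(0)=\sup\{f(y): y\geq 0\}=\sup_{y\in I}f(y)=1$ by normality, so $B=\{x\in I: f^{R}(x)\geq 1\}$ contains $0$ and $R^{1}(f)=\sup B$ is well-defined. For any $x\in [0, R^{1}(f))$, since $x<\sup B$ there is some $x_{0}\in B$ with $x_{0}>x$, whence $f^{R}(x_{0})=1$; monotonicity of $f^{R}$ then gives $f^{R}(x)\geq f^{R}(x_{0})=1$, so $f^{R}(x)=1$.

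There is no serious obstacle here; the only point requiring a little care is that the statement concerns the half-open intervals $(L_{1}(f), 1]$ and $[0, R^{1}(f))$ rather than the closed ones. This is precisely what makes the argument clean: choosing $x_{0}$ strictly between the boundary point and $x$ lets the approximation go through via monotonicity alone, without having to decide whether the defining infimum or supremum is itself attained at the endpoint.
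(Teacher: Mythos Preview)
Your proof is correct. The paper actually states this proposition without proof, presumably because the authors regard it as immediate from the monotonicity of $f^{L}$ and $f^{R}$ and the definitions of $L_{1}(f)$ and $R^{1}(f)$; your argument is exactly the natural one they would have had in mind.
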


%

\begin{lemma}\label{sup{f^R}}
Let $f, g\in \mathbf{L}$. Then,
\begin{itemize}
\item[{\rm (1)}] $L_{1}(f\sqcap g)= L_{1}(f)\wedge L_{1}(g)$, i.e., $\inf\{x\in I: (f\sqcap g)^{L}(x)=1\}=\inf\{x\in I: f^{L}(x)=1\}
\wedge \inf\{x\in I: g^{L}(x)=1\}$;
\item[{\rm (2)}] $R^{1}(f\sqcap g)= R^{1}(f)\wedge R^{1}(g)$, i.e., $\sup\{x\in I: (f\sqcap g)^{R}(x)=1\}=\sup\{x\in I: f^{R}(x)=1\}
\wedge \sup\{x\in I: g^{R}(x)=1\}$;
\item[{\rm (3)}] $L_{1}(f\sqcup g)=L_{1}(f)\vee L_{1}(g)$, i.e., $\inf\{x\in I: (f\sqcup g)^{L}(x)=1\}=\inf\{x\in I: f^{L}(x)=1\}
\vee \inf\{x\in I: g^{L}(x)=1\}$;
\item[{\rm (4)}] $R^{1}(f\sqcup g)=R^{1}(f)\vee R^{1}(g)$, i.e., $\sup\{x\in I: (f\sqcup g)^{R}(x)=1\}=\sup\{x\in I: f^{R}(x)=1\}
\vee \sup\{x\in I: g^{R}(x)=1\}$.
\end{itemize}
\end{lemma}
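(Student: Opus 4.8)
The plan is to deduce all four identities from Lemma~\ref{L-R-Operation}, which computes the $L$- and $R$-functions of $f\sqcap g$ and $f\sqcup g$ as the pointwise lattice combinations $f^{L}\vee g^{L}$, $f^{R}\wedge g^{R}$, $f^{L}\wedge g^{L}$, and $f^{R}\vee g^{R}$. The first step is to observe that, since every $f\in\mathbf{L}$ maps into $I=[0,1]$, we have $f^{L}(x)\le 1$ for all $x$, so the condition ``$f^{L}(x)\ge 1$'' appearing in the definition of $L_{1}(f)$ coincides with ``$f^{L}(x)=1$''; the same remark applies to $f^{R}$, $g^{L}$, and $g^{R}$. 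Thus each quantity in the statement is the infimum (for $L_{1}$) or the supremum (for $R^{1}$) of the $1$-superlevel set of the relevant $L$- or $R$-function.

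The key structural fact is that $f^{L}$ is increasing and $f^{R}$ is decreasing. Hence $\{x:f^{L}(x)=1\}$ is an up-set, i.e.\ an interval with right endpoint $1$ (if $f^{L}(x)=1$ and $y\ge x$ then $1\ge f^{L}(y)\ge f^{L}(x)=1$), whose infimum is $L_{1}(f)$; it is nonempty because normality gives $f^{L}(1)=\sup_{y\in I}f(y)=1$. Dually, $\{x:f^{R}(x)=1\}$ is a nonempty down-set with supremum $R^{1}(f)$, since $f^{R}(0)=\sup_{y\in I}f(y)=1$. For such sets one has the elementary order identities $\inf(A\cup B)=\inf A\wedge\inf B$ and $\inf(A\cap B)=\inf A\vee\inf B$ for up-sets $A,B$, together with the supremum analogues $\sup(A\cup B)=\sup A\vee\sup B$ and $\sup(A\cap B)=\sup A\wedge\sup B$ for down-sets.

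The four parts then follow by matching the appropriate line of Lemma~\ref{L-R-Operation} to one of these identities. For (1), $(f\sqcap g)^{L}=f^{L}\vee g^{L}$, so
\[
\{x:(f\sqcap g)^{L}(x)=1\}=\{x:f^{L}(x)=1\}\cup\{x:g^{L}(x)=1\}
\]
is a union of up-sets, and passing to infima gives $L_{1}(f\sqcap g)=L_{1}(f)\wedge L_{1}(g)$. For (3), $(f\sqcup g)^{L}=f^{L}\wedge g^{L}$, whose $1$-superlevel set is the intersection of the two up-sets, so its infimum is $L_{1}(f)\vee L_{1}(g)$. Parts (2) and (4) are the down-set analogues: for $\sqcap$ one has $(f\sqcap g)^{R}=f^{R}\wedge g^{R}$, hence an intersection of down-sets whose supremum is $R^{1}(f)\wedge R^{1}(g)$; for $\sqcup$ one has $(f\sqcup g)^{R}=f^{R}\vee g^{R}$, hence a union of down-sets whose supremum is $R^{1}(f)\vee R^{1}(g)$.

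I do not expect a genuine obstacle, since the argument is purely order-theoretic once Lemma~\ref{L-R-Operation} is invoked. The only points requiring a moment's care are the passage from $\ge 1$ to $=1$, which is immediate from normality, and the fact that the up-sets and down-sets above may be half-open at their defining endpoint $L_{1}$ (resp.\ $R^{1}$); since the infimum of a union or intersection depends only on these endpoints, this ambiguity does not affect any of the four computations.
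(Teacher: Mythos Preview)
Your proof is correct and follows essentially the same approach as the paper: both invoke Lemma~\ref{L-R-Operation} to identify the $L$- and $R$-functions of $f\sqcap g$ and $f\sqcup g$, observe that the $1$-level sets are intervals (up-sets or down-sets) with endpoints $L_{1}$ or $R^{1}$, and then compute the infimum or supremum of the resulting union or intersection. The paper carries out part~(1) explicitly via the inclusions $(\eta_1,1]\subset\{x:f^{L}(x)=1\}\subset[\eta_1,1]$ and leaves the rest as ``similarly''; your version states the order-theoretic identity once and applies it uniformly to all four parts, but the content is the same.
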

\begin{proof}
For convenience, denote $\inf\{x\in I: f^{L}(x)=1\}=\eta_1$ and $\inf\{x\in I: g^{L}(x)=1\}=\eta_2$. Clearly,
\begin{equation}\label{Eq-3}
(\eta_1, 1] \subset \{x\in I: f^{L}(x)=1\} \subset [\eta_1, 1],
\end{equation}
and
\begin{equation}\label{Eq-4}
(\eta_2, 1] \subset \{x\in I: g^{L}(x)=1\} \subset [\eta_2, 1].
\end{equation}
Applying Lemma~\ref{L-R-Operation} yields that
\begin{align*}
&\quad\ \{x\in I: (f\sqcap g)^{L}(x)=1\}\\
&=\{x\in I: f^{L}(x)\vee g^{L}(x)=1\}\\
&=\{x\in I: f^{L}(x)=1\}\\
& \quad\ \cup \{x\in I: g^{L}(x)=1\}.
\end{align*}
This, together with (\ref{Eq-3}) and (\ref{Eq-4}), implies that
$$
(\eta_1\wedge \eta_2, 1]\subset \{x\in I: (f\sqcap g)^{L}(x)=1\}
\subset [\eta_1\wedge \eta_2, 1].
$$
Thus,
$$
\inf\{x\in I: (f\sqcap g)^{L}(x)=1\}=\eta_1\wedge \eta_2.
$$
The rest can be verified similarly.
\end{proof}

\begin{proposition}\label{convex-cha}
For $f\in \mathbf{L}$,
$$
f(x)=
\begin{cases}
f^{L}(x), & x\in [0, \xi_1), \\
f(\xi_1), & x=\xi_1, \\
1, & x\in (\xi_1, \xi_2), \\
f(\xi_2), & x=\xi_2, \\
f^{R}(x), & x\in (\xi_2, 1],
\end{cases}
$$
where $\xi_1=L_1(f)$ and $\xi_2=R^{1}(f)$.
\end{proposition}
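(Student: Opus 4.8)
The plan is to reduce the whole statement to the convexity characterization together with the behaviour of $f^{L}$ and $f^{R}$ near the two distinguished points $\xi_{1}=L_{1}(f)$ and $\xi_{2}=R^{1}(f)$. Since $f\in\mathbf{L}$ is convex, Proposition~\ref{WW-L-function}(6) supplies the single structural fact I would lean on, namely the pointwise identity $f(x)=f^{L}(x)\wedge f^{R}(x)$ for every $x\in I$. The entire argument then becomes a matter of evaluating $f^{L}(x)$ and $f^{R}(x)$ on each of the five regions and reading off the minimum.

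First I would record the ordering $\xi_{1}\leq\xi_{2}$. Because $\mathbf{L}\subset\mathbf{N}$, the function $f$ is normal, so Lemma~\ref{<-Lemma} applies and gives exactly $L_{1}(f)\leq R^{1}(f)$; this is what makes the three intervals $[0,\xi_{1})$, $(\xi_{1},\xi_{2})$, $(\xi_{2},1]$ fit together consistently. Next I would invoke Proposition~\ref{f^l--f^r=1}, again using normality of $f$: part~(1) yields $f^{L}(x)=1$ for all $x\in(\xi_{1},1]$, and part~(2) yields $f^{R}(x)=1$ for all $x\in[0,\xi_{2})$.

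With these in hand the case analysis is immediate. For $x\in[0,\xi_{1})$, the ordering $\xi_{1}\leq\xi_{2}$ gives $x<\xi_{2}$, hence $f^{R}(x)=1$ and therefore $f(x)=f^{L}(x)\wedge 1=f^{L}(x)$. For $x\in(\xi_{1},\xi_{2})$ both $f^{L}(x)=1$ and $f^{R}(x)=1$, so $f(x)=1$. For $x\in(\xi_{2},1]$, the ordering gives $x>\xi_{1}$, hence $f^{L}(x)=1$ and therefore $f(x)=1\wedge f^{R}(x)=f^{R}(x)$. The two remaining cases $x=\xi_{1}$ and $x=\xi_{2}$ are the literal values $f(\xi_{1})$ and $f(\xi_{2})$ and require nothing to verify.

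I do not anticipate a genuine obstacle: the work lies entirely in correctly matching each region to the relevant part of Proposition~\ref{f^l--f^r=1}. The only point demanding care is that the inequality $\xi_{1}\leq\xi_{2}$ must be secured first, since it is precisely what guarantees $x<\xi_{2}$ on the left interval and $x>\xi_{1}$ on the right interval; without it the two endpoints could cross, the middle interval could fail to be the one on which both $f^{L}$ and $f^{R}$ equal $1$, and the stated formula would be ill-posed.
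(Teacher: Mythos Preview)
Your proposal is correct and follows essentially the same route as the paper: both arguments rest on the convexity identity $f=f^{L}\wedge f^{R}$, use Lemma~\ref{<-Lemma} for $\xi_{1}\le\xi_{2}$, and then read off the three nontrivial cases from the fact that $f^{L}\equiv 1$ to the right of $\xi_{1}$ and $f^{R}\equiv 1$ to the left of $\xi_{2}$. The only cosmetic difference is that you cite Proposition~\ref{f^l--f^r=1} explicitly, whereas the paper deduces the same facts directly from the definitions of $\xi_{1}$ and $\xi_{2}$.
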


\begin{proof}
Since $f$ is convex, from Proposition~\ref{WW-L-function}, it follows that $f=f^{L}\wedge f^{R}$. Consider the
following three cases:
\begin{itemize}
  \item[Case 1.] If $x\in [0, \xi_1)$, from Lemma~\ref{<-Lemma}, it follows that $x<\xi_1\leq \xi_2$. This implies that
$f^{R}(x)=1$. Thus, $f(x)=f^{L}(x)\wedge f^{R}(x)=f^{L}(x)$;
  \item[Case 2.] If $x\in (\xi_1, \xi_2)$, form the choices of $\xi_1$ and $\xi_2$, it can be verified that $f^{L}(x)=f^{R}(x)=1$.
  This implies that $f(x)=f^{L}(x)\wedge f^{R}(x)=1$;
  \item[Case 3.] If $x\in (\xi_2, 1]$, from Lemma~\ref{<-Lemma}, it follows that $\xi_1\leq \xi_2<x$. This implies that
$f^{L}(x)=1$. Thus, $f(x)=f^{L}(x)\wedge f^{R}(x)=f^{R}(x)$.
\end{itemize}
\end{proof}

\begin{remark}\label{remark-2}
From Proposition~\ref{convex-cha}, it follows that
\begin{enumerate}[(1)]
  \item every function $f$ in $\mathbf{L}$ is increasing on $[0, L_{1}(f))$,
constant on $(L_{1}(f), R^{1}(f))$, and decreasing on $(R^{1}(f), 1]$;
  \item if $L_{1}(f)<R^{1}(f)$, one has
  $f(L_{1}(f))\geq f^{L_{\mathrm{w}}}(L_{1}(f))$, i.e.,
  $f(L_{1}(f))\geq f(x)$ for all $x\in [0, L_{1}(f))$, and $f(R^{1}(f))\geq f^{R_{\mathrm{w}}}(R^{1}(f))$.
  \end{enumerate}
\end{remark}

\begin{corollary}\label{f^{R}}
Let $f\in \mathbf{L}$ satisfy that $R^{1}(f)<1$. Then, for $x\in [0, 1)$, $f^{R}(x)=\sup\{f(y): y\in [x, 1)\}$.
\end{corollary}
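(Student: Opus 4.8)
\section*{Proof proposal}

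The plan is to unwind the definition and reduce the statement to a single inequality about the value $f(1)$. By definition $f^{R}(x)=\sup\{f(y):y\geq x\}=\sup\{f(y):y\in[x,1]\}$, so for every $x\in[0,1)$ the restricted supremum satisfies $\sup\{f(y):y\in[x,1)\}\leq f^{R}(x)$ trivially, since the latter is taken over a larger set. Thus the entire content of the corollary is the reverse inequality, and because $f^{R}(x)=\max\bigl(\sup\{f(y):y\in[x,1)\},\,f(1)\bigr)$, it suffices to prove that dropping the endpoint $y=1$ does not lower the supremum, i.e.\ that $f(1)\leq\sup\{f(y):y\in[x,1)\}$ for each $x\in[0,1)$.

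The key tool is Remark~\ref{remark-2}(1), which tells us that $f$ is decreasing on $(R^{1}(f),1]$; in particular $f(z)\geq f(1)$ whenever $R^{1}(f)<z\leq 1$. I would then split on the position of $x$ relative to $\xi_{2}=R^{1}(f)$. If $x>R^{1}(f)$, then $x$ itself lies in $(R^{1}(f),1)\subseteq[x,1)$ and decreasingness gives $f(x)\geq f(1)$, so the restricted supremum already exceeds $f(1)$. If instead $x\leq R^{1}(f)$, then since the hypothesis $R^{1}(f)<1$ forces the interval $(R^{1}(f),1)$ to be nonempty and contained in $[x,1)$, I can pick any $z$ with $R^{1}(f)<z<1$; again $f(z)\geq f(1)$ by monotonicity, and $z\in[x,1)$, so $\sup\{f(y):y\in[x,1)\}\geq f(z)\geq f(1)$. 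In both cases $f(1)\leq\sup\{f(y):y\in[x,1)\}$, which combined with the easy inequality yields $f^{R}(x)=\sup\{f(y):y\in[x,1)\}$.

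The only delicate point is the second case, and it is precisely where the hypothesis $R^{1}(f)<1$ is indispensable: if $R^{1}(f)=1$, the function could attain its supremum $1$ only at the single endpoint $y=1$ (a jump up to $1$ at the right end), and then the restricted supremum over $[x,1)$ would be strictly smaller than $f^{R}(x)$. The assumption $R^{1}(f)<1$ guarantees an interior point of $[x,1)$ at which $f$ dominates $f(1)$, so I expect no genuine obstacle beyond correctly invoking the monotonicity from Remark~\ref{remark-2} and checking that $(R^{1}(f),1)$ is nonempty.
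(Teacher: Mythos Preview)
Your proof is correct and follows essentially the same idea as the paper's: both arguments reduce to showing $f(1)$ is dominated by $f$ at some interior point of $[x,1)$, and both obtain such a point in $(R^{1}(f),1)$ and invoke the fact that $f$ is decreasing there (you via Remark~\ref{remark-2}, the paper via Proposition~\ref{convex-cha}). The paper avoids your case split by directly choosing $\zeta\in(\max\{x,R^{1}(f)\},1)$, which works uniformly; also, in your first case the inclusion ``$(R^{1}(f),1)\subseteq[x,1)$'' is misstated (it fails when $x>R^{1}(f)$), though your intended point that $x\in(R^{1}(f),1)\cap[x,1)$ is clear and suffices.
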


\begin{proof}
For $x\in [0, 1)$, choose $\zeta\in (R^{1}(f), 1)$ such that $\zeta>x$. From Proposition~\ref{convex-cha}, it follows that
$f(\zeta)\geq f(1)$, implying that
$$
f^{R}(x)=\sup\{f(y): x\leq y\leq 1\}=\sup\{f(y): x\leq y<1\}.
$$
\end{proof}

\section{A negative answer to Question~\ref{Q-1}}\label{S-iii}

This section constructs a binary operation $\circledast$ on $I^{[2]}$ satisfying conditions (4)
and (5), which does not satisfy condition (5$^{\prime}$), answering negatively Question~\ref{Q-1}.
\begin{proposition}\label{Min-Single}
Let $[a_1, b_1], [a_2, b_2]\subset I$. If $[a_1 \wedge a_2, b_1 \wedge b_2]$
is a single point, then one of $[a_1, b_1]$ and $[a_2, b_2]$ is a single point.
\end{proposition}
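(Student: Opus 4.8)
The plan is to reduce the statement to a short case analysis on which endpoints attain the two meets. Write the hypothesis as $a_1 \wedge a_2 = b_1 \wedge b_2$, and recall that validity of the intervals gives $a_1 \leq b_1$ and $a_2 \leq b_2$. By the symmetry of the claim in the two intervals (swapping the labels $1$ and $2$ preserves both the meet hypothesis and the conclusion), I would assume without loss of generality that $a_1 \leq a_2$, so that $a_1 \wedge a_2 = a_1$ and the hypothesis collapses to $a_1 = b_1 \wedge b_2$.

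Next I would split according to the order of the right endpoints. If $b_1 \leq b_2$, then $b_1 \wedge b_2 = b_1$, so the hypothesis yields $b_1 = a_1$, and hence $[a_1, b_1]$ is a single point. If instead $b_2 < b_1$, then $b_1 \wedge b_2 = b_2$, whence $b_2 = a_1$; combining this with the chain $a_1 \leq a_2 \leq b_2 = a_1$, where the middle inequality is the validity constraint on the second interval, forces $a_2 = b_2$, so $[a_2, b_2]$ is a single point. In either case the desired conclusion holds.

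I do not expect any serious obstacle here, since the argument is elementary and closes in a few lines. The only point requiring a moment's care is the second subcase: there the conclusion about $[a_2, b_2]$ does not follow from the value of the meet alone but needs the validity inequality $a_2 \leq b_2$ to squeeze the two endpoints together; dropping that constraint would leave $a_2$ unpinned. One could equivalently argue by contraposition—assuming both intervals are nondegenerate and deriving the strict inequality $a_1 \wedge a_2 < b_1 \wedge b_2$—but the direct case split above is the cleanest route.
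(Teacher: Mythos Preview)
Your proof is correct and follows essentially the same approach as the paper's: both arguments reduce the hypothesis $a_1\wedge a_2=b_1\wedge b_2$ to an elementary case split on which endpoints realize the two minima, using the validity constraints $a_i\le b_i$ to force degeneracy. The only cosmetic difference is that you invoke the symmetry in the labels to assume $a_1\le a_2$ up front, whereas the paper treats the cases $a_2\le a_1$ and $a_1<a_2$ separately; your organization is marginally cleaner but the content is the same.
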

\begin{proof}
Consider the following two cases:
\begin{itemize}
\item[Case 1.] If $a_2\leq a_1$, then $[a_1 \wedge a_2, b_1 \wedge b_2]=[a_2, b_1 \wedge b_2]$
is a single point, i.e., $b_1\wedge b_2=a_2$. This implies that
$b_1=a_2$ or $b_2=a_2$. When $b_1=a_2$, one has
$[a_1, b_1]$ is a single point, since $[a_1, b_1]\subset [a_2, b_1]$. When
$b_2=a_2$, one has $[a_2, b_2]$ is a single point.

\item[Case 2.] If $a_1<a_2$, then $[a_1\wedge a_2, b_1\wedge b_2]
=[a_1, b_1\wedge b_2]$ is a single point, i.e., $a_1=b_1\wedge b_2$. This, together with
$b_2\geq a_2>a_1$, implies that $b_1\wedge b_2=b_1=a_1$. This means that
$[a_1, b_1]$ is a single point.
\end{itemize}
\end{proof}

\begin{proposition}\label{Max-Single}
Let $[a_1, b_1], [a_2, b_2]\subset I$. If $[a_1\vee a_2, b_1\vee b_2]$
is a single point, then one of $[a_1, b_1]$ and $[a_2, b_2]$ is a single point.
\end{proposition}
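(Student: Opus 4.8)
The plan is to mirror the case analysis already used for Proposition~\ref{Min-Single}, simply replacing the minima by maxima; the statement is in this sense the order-theoretic dual. First I would unwind the hypothesis: ``$[a_1\vee a_2, b_1\vee b_2]$ is a single point'' says precisely that $a_1\vee a_2=b_1\vee b_2$, and the goal is to force either $a_1=b_1$ or $a_2=b_2$. Since the two intervals play symmetric roles, it suffices to split on which of $b_1,b_2$ is the larger, and the whole argument rests on the standing interval constraints $a_1\leq b_1$ and $a_2\leq b_2$.

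I would treat first the case $b_2\leq b_1$, so that $b_1\vee b_2=b_1$ and hence $a_1\vee a_2=b_1$. Then either $a_1=b_1$, in which case $[a_1,b_1]$ is a single point and we are done, or $a_2=b_1$. In the latter subcase the inequalities $a_2\leq b_2\leq b_1=a_2$ squeeze $b_2=a_2$, so that $[a_2,b_2]$ is a single point. The symmetric case $b_1<b_2$ gives $a_1\vee a_2=b_2$; here the point is that $a_1\leq b_1<b_2$ rules out $a_1=b_2$, so necessarily $a_2=b_2$, which again makes $[a_2,b_2]$ a single point.

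I expect no genuine obstacle in this proof. The only step that requires any care is the ``squeeze'' $a_2\leq b_2\leq b_1=a_2$ in the first case, where one must combine the interval assumption $a_2\leq b_2$ with $b_2\leq b_1$ (and the equality $a_2=b_1$) to collapse $[a_2,b_2]$ to a point; the analogous pinning in the second case uses instead that the strict inequality $a_1<b_2$ forbids $a_1=b_2$. Everything else is routine case-checking, entirely parallel to the proof of Proposition~\ref{Min-Single}.
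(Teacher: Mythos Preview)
Your proof is correct and follows essentially the same strategy as the paper: an elementary case split on which coordinate realizes the maximum, followed by a squeeze using the interval constraints $a_i\leq b_i$. The only cosmetic difference is that the paper splits on the $a_i$'s (comparing $a_1$ with $a_2$) rather than the $b_i$'s, which lets it avoid the extra subcase in your first branch; otherwise the arguments are interchangeable.
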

\begin{proof}
Consider the following two cases:
\begin{itemize}
\item[Case 1.] If $a_2\leq a_1$, then $[a_1\vee a_2, b_1\vee b_2]
=[a_1, b_1\vee b_2]$ is a single point. This, together with $[a_1, b_1\vee b_2]
=[a_1, b_1]\cup [a_1, b_2]$, implies that $[a_1, b_1]$ is a single point.

\item[Case 2.] If $a_1<a_2$, then $[a_1\vee a_2, b_1\vee b_2]
=[a_2, b_1\vee b_2]$ is a single point. This, together with $[a_2, b_1\vee b_2]
=[a_2, b_1]\cup [a_2, b_2]$, implies that $[a_2, b_2]$ is a single point.
\end{itemize}
\end{proof}

%
%

\begin{definition}\label{xinxing-operation}
Define a binary operation $\circledast$ on $I^{[2]}$ as follows: for $\bm{x}, \bm{y}\in I^{[2]}$,
$$
\bm{x}\circledast \bm{y}=\max\{x\cdot y : x\in \bm{x}, y\in \bm{y}\}.
$$
\end{definition}

\begin{proposition}\label{Pro-Cond-5'}
The binary operation $\circledast$ defined in Definition~\ref{xinxing-operation} does not
satisfy condition (5$^{\prime}$).
\end{proposition}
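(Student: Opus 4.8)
The plan is to exploit the fact that the operation $\circledast$ collapses every pair of intervals to a \emph{single point}, after which condition (5$^{\prime}$) becomes essentially impossible to satisfy. First I would compute $\bm{x}\circledast\bm{y}$ explicitly. Writing $\bm{x}=[x_1, y_1]$ and $\bm{y}=[x_2, y_2]$, the map $(x, y)\mapsto x\cdot y$ is nondecreasing in each argument on the nonnegative reals, so its maximum over the rectangle $\bm{x}\times\bm{y}$ is attained at the top-right corner $(y_1, y_2)$. Hence $\bm{x}\circledast\bm{y}=y_1 y_2$, which as an element of $I^{[2]}$ is the degenerate interval $[y_1 y_2, y_1 y_2]$; that is, $\circledast$ always returns the singleton determined by the product of the right endpoints.

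Next I would record the trivial but decisive observation that for single points one has $[p, p]\subseteq [q, q]$ if and only if $p=q$. Combined with the previous step, this shows that condition (5$^{\prime}$) would force $y_1\cdot w = y_2\cdot w$ whenever $\bm{x}=[x_1, y_1]\subseteq \bm{y}=[x_2, y_2]$ and $w$ denotes the right endpoint of $\bm{z}$. It therefore suffices to choose an inclusion $\bm{x}\subseteq\bm{y}$ with \emph{distinct} right endpoints together with a $\bm{z}$ whose right endpoint is positive.

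Concretely, I would take $\bm{x}=[\tfrac12, \tfrac12]$, $\bm{y}=[0, 1]$, and $\bm{z}=[1, 1]$. Then $\bm{x}\subseteq\bm{y}$ holds since $0\leq \tfrac12\leq \tfrac12\leq 1$, while $\bm{x}\circledast\bm{z}=\tfrac12$ and $\bm{y}\circledast\bm{z}=1$ yield the singletons $[\tfrac12, \tfrac12]$ and $[1, 1]$, and clearly $[\tfrac12, \tfrac12]\not\subseteq[1, 1]$. This witnesses the failure of (5$^{\prime}$). The only genuine point to verify is the monotonicity argument pinning the maximum to the right endpoints in the first step; everything else is immediate once one sees that $\circledast$ destroys all interval width, so I expect no real obstacle beyond making that structural observation precise.
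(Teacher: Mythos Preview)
Your proposal is correct and follows essentially the same approach as the paper: both exhibit an explicit counterexample by exploiting that $\circledast$ always outputs the singleton $\{y_1 y_2\}$ determined by the product of right endpoints, so any inclusion $\bm{x}\subseteq\bm{y}$ with distinct right endpoints and a $\bm{z}$ with positive right endpoint will do. The paper happens to pick $\bm{x}=\{0.5\}$, $\bm{y}=[0.5,1]$, $\bm{z}=\{0.5\}$, while you pick $\bm{x}=[\tfrac12,\tfrac12]$, $\bm{y}=[0,1]$, $\bm{z}=[1,1]$; your added remark that the maximum is attained at the top-right corner makes the structure slightly more explicit, but the arguments are otherwise the same.
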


\begin{proof}
Take $\bm{x}=\{0.5\}$ and $\bm{y}=[0.5, 1]$. Clearly, $\bm{x}\subset \bm{y}$.
Let $\bm{z}=\{0.5\}\in I^{[2]}$. From Definition~\ref{xinxing-operation}, it can be verified that
$$
\bm{z}\circledast \bm{x}=\{0.25\},
$$
and
$$
\bm{z}\circledast\bm{y}=\{0.5\}.
$$
Clearly, $\bm{z}\circledast \bm{x}\nsubseteq \bm{z}\circledast\bm{y}$.
Therefore, $\circledast$ does not satisfy condition 5$^{\prime}$.
\end{proof}

\begin{proposition}\label{Pro-Cond-4}
The binary operation $\circledast$ defined in Definition~\ref{xinxing-operation}
satisfies condition (4) in Definition~\ref{Def-2}.
\end{proposition}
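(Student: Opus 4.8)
The plan is to first reduce the operation $\circledast$ to an explicit formula in terms of the endpoints of the intervals, which renders every subsequent computation transparent. Since every $x\in\bm{x}$ and $y\in\bm{y}$ lies in $[0,1]$ and is therefore nonnegative, the product $xy$ is maximized by choosing the largest available factors. Writing $\bm{x}=[a_1,b_1]$ and $\bm{y}=[a_2,b_2]$, this gives $\bm{x}\circledast\bm{y}=\{b_1 b_2\}$, a single-point interval determined entirely by the right endpoints $b_1,b_2$. I would record this as a preliminary observation before touching condition~(4).

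Next I would set $\bm{x}=[a_1,b_1]$, $\bm{y}=[a_2,b_2]$, $\bm{z}=[a_3,b_3]$ and compute both sides of the identity $\bm{x}\circledast(\bm{y}\vee\bm{z})=(\bm{x}\circledast\bm{y})\vee(\bm{x}\circledast\bm{z})$ using this formula. For the left-hand side, $\bm{y}\vee\bm{z}=[a_2\vee a_3,\,b_2\vee b_3]$, so the reduction yields $\bm{x}\circledast(\bm{y}\vee\bm{z})=\{b_1(b_2\vee b_3)\}$. For the right-hand side, the reduction together with the join of singletons gives $(\bm{x}\circledast\bm{y})\vee(\bm{x}\circledast\bm{z})=\{b_1 b_2\}\vee\{b_1 b_3\}=\{(b_1 b_2)\vee(b_1 b_3)\}$.

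The heart of the argument is then the single scalar identity $b_1(b_2\vee b_3)=(b_1 b_2)\vee(b_1 b_3)$. This holds because multiplication by the nonnegative constant $b_1$ is an order-preserving map on $[0,1]$ and hence commutes with taking maxima: whichever of $b_2,b_3$ is the larger remains the larger after multiplication by $b_1$. I would state this monotonicity explicitly, from which equality of the two displayed singletons is immediate, completing the verification of condition~(4).

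I do not expect any genuine obstacle: the entire content is the distributivity of nonnegative multiplication over the maximum. The only step deserving care is the reduction itself, namely the claim that the maximum in Definition~\ref{xinxing-operation} is attained exactly at the right endpoints; this rests on the nonnegativity of all factors and would fail for signed values, but on $I^{[2]}\subseteq[0,1]$ it is automatic.
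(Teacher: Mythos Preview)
Your proposal is correct and follows essentially the same route as the paper: reduce $\circledast$ to the singleton $\{b_1 b_2\}$ given by the right endpoints, then check condition~(4) via the scalar identity $b_1(b_2\vee b_3)=(b_1 b_2)\vee(b_1 b_3)$. The only difference is that you spell out the justifications (nonnegativity forcing the maximum at the right endpoints, monotonicity of multiplication giving distributivity over $\vee$) that the paper leaves implicit.
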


\begin{proof}
For $\bm{x}=[x_1, x_2], \bm{y}=[y_1, y_2], \bm{z}=[z_1, z_2]\in I^{[2]}$,
one has
$$
\bm{x}\circledast \bm{y}=[x_1, x_2]\circledast [y_1, y_2]=\{x_2\cdot y_2\},
$$
$$
\bm{x}\circledast \bm{z}=[x_1, x_2]\circledast [z_1, z_2]=\{x_2\cdot z_2\},
$$
and
$$
\bm{x}\circledast (\bm{y}\vee \bm{z})=[x_1, x_2]\circledast [y_1\vee z_1, y_2\vee z_2]
=\{x_2\cdot (y_2\vee z_2)\},
$$
implying that
\begin{align*}
(\bm{x}\circledast \bm{y})\vee (\bm{x}\circledast \bm{z})&=
\{(x_2\cdot y_2)\vee (x_2\cdot z_2)\}\\
&=\{x_2\cdot (y_2\vee z_2)\}=
\bm{x}\circledast (\bm{y}\vee \bm{z}).
\end{align*}
\end{proof}

\begin{proposition}\label{Pro-Cond-5}
The binary operation $\circledast$ defined in Definition~\ref{xin-operation}
satisfies condition (5) in Definition~\ref{Def-2}.
\end{proposition}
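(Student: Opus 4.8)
The plan is to mirror the computation carried out in the proof of Proposition~\ref{Pro-Cond-4}, with the join $\vee$ replaced by the meet $\wedge$. The crucial observation is that, since the product $x\cdot y$ is increasing in each argument on $[0, 1]^{2}$ (both factors being nonnegative), the maximum defining $\circledast$ is always attained at the right endpoints. Consequently, for intervals $\bm{x}=[x_1, x_2]$, $\bm{y}=[y_1, y_2]$, $\bm{z}=[z_1, z_2]$, every application of $\circledast$ collapses to a single point equal to the product of the upper endpoints, a fact already recorded implicitly in Proposition~\ref{Pro-Cond-4}.

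First I would record that
$$
\bm{x}\circledast \bm{y}=\{x_2\cdot y_2\}, \qquad \bm{x}\circledast \bm{z}=\{x_2\cdot z_2\}.
$$
Next, using $\bm{y}\wedge \bm{z}=[y_1\wedge z_1, y_2\wedge z_2]$, the left-hand side of condition (5) becomes
$$
\bm{x}\circledast (\bm{y}\wedge \bm{z})=\{x_2\cdot (y_2\wedge z_2)\}.
$$
For the right-hand side, using that the meet of two single points satisfies $\{a\}\wedge\{b\}=\{a\wedge b\}$, I would obtain
$$
(\bm{x}\circledast \bm{y})\wedge (\bm{x}\circledast \bm{z})=\{(x_2\cdot y_2)\wedge (x_2\cdot z_2)\}.
$$

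The proof then reduces to the scalar identity $x_2\cdot(y_2\wedge z_2)=(x_2\cdot y_2)\wedge(x_2\cdot z_2)$. This holds because multiplication by the nonnegative number $x_2$ is monotone: if $y_2\leq z_2$, then both sides equal $x_2\cdot y_2$, and symmetrically when $z_2\leq y_2$. There is no genuine obstacle here, since the distributivity of multiplication over the minimum of nonnegative reals is elementary; the only point requiring care is the bookkeeping that $\circledast$ indeed always returns the singleton $\{x_2\cdot y_2\}$, exactly as in Proposition~\ref{Pro-Cond-4}. Together with Propositions~\ref{Pro-Cond-5'} and~\ref{Pro-Cond-4}, this completes the verification that $\circledast$ satisfies conditions (4) and (5) yet violates (5$^{\prime}$), thereby negatively answering Question~\ref{Q-1}.
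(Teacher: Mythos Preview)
Your proposal is correct and follows essentially the same route as the paper's own proof: both compute $\bm{x}\circledast\bm{y}=\{x_2\cdot y_2\}$, $\bm{x}\circledast\bm{z}=\{x_2\cdot z_2\}$, and $\bm{x}\circledast(\bm{y}\wedge\bm{z})=\{x_2\cdot(y_2\wedge z_2)\}$, then reduce condition~(5) to the scalar identity $x_2\cdot(y_2\wedge z_2)=(x_2\cdot y_2)\wedge(x_2\cdot z_2)$. The only difference is that you make the justification of this last identity (monotonicity of multiplication by a nonnegative scalar) explicit, whereas the paper leaves it implicit.
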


\begin{proof}
For $\bm{x}=[x_1, x_2], \bm{y}=[y_1, y_2], \bm{z}=[z_1, z_2]\in I^{[2]}$,
one has
$$
\bm{x}\circledast \bm{y}=[x_1, x_2]\circledast [y_1, y_2]=\{x_2\cdot y_2\},
$$
$$
\bm{x}\circledast \bm{z}=[x_1, x_2]\circledast [z_1, z_2]=\{x_2\cdot z_2\},
$$
and
$$
\bm{x}\circledast (\bm{y}\wedge \bm{z})=[x_1, x_2]\circledast [y_1\wedge z_1, y_2\wedge z_2]
=\{x_2\cdot (y_2\wedge z_2)\},
$$
implying that
\begin{align*}
(\bm{x}\circledast \bm{y})\wedge (\bm{x}\circledast \bm{z})&=
\{(x_2\cdot y_2)\wedge (x_2\cdot z_2)\}\\
&=\{x_2\cdot (y_2\wedge z_2)\}=
\bm{x}\circledast (\bm{y}\wedge \bm{z}).
\end{align*}
\end{proof}

\begin{remark}
Summing up Propositions~\ref{Pro-Cond-5'}--\ref{Pro-Cond-5}, it follows that conditions (4) and (5) in
Definition~\ref{Def-2} do not imply condition (5$^{\prime}$). This gives a negative answer to Question~\ref{Q-1}.
\end{remark}

\section{Construct a $t_{lor}$-norm `$\wustar$' on $\mathbf{L}$}\label{S-iV}
Modifying our construction method in \cite{WC-TFS-2}, this section introduces a binary
operation `$\wustar$' on $\mathbf{L}$ and proves
that it is indeed a $t_{lor}$-norm.

\begin{definition}
Define a binary operation $\barwedge: \mathbf{M}^{2}\longrightarrow \mathbf{M}$
as follows: for $f, g\in \mathbf{M}$,
$$
(f\barwedge g)(x)=
\begin{cases}
(f\sqcap g)(x), & x\in [0, 1), \\
0, & x=1.
\end{cases}
$$
\end{definition}

\begin{definition}\label{bingstar-operation-2}
Define a binary operation $\wustar: \mathbf{L}^2\rightarrow \mathbf{M}$ as follows: for $f, g\in \mathbf{L}$,

Case 1. $f=\bm{1}_{\{1\}}$, $f\wustar g=g\wustar f=g$;

Case 2. $g=\bm{1}_{\{1\}}$, $f\wustar g=g\wustar f=f$;

Case 3. $f\neq \bm{1}_{\{1\}}$ and $g\neq \bm{1}_{\{1\}}$,
\begin{equation}\label{xin-operation}
f\wustar g=\begin{cases}
f\sqcap g, & f(1)\wedge g(1)=1,\\
f\barwedge g, & f(1)\wedge g(1)<1.
\end{cases}
\end{equation}
\end{definition}

\begin{remark}\label{Remark-bingstar}
From Definition~\ref{bingstar-operation-2} and Remark~\ref{remark-1}, it can be verified that, for $f, g\in \mathbf{L}\backslash
  \left\{\bm{1}_{\{1\}}\right\}$,
  $$
  (f\wustar g)(1)=
  \begin{cases}
  1, & f(1)\wedge g(1)=1, \\
  0, & f(1)\wedge g(1)<1.
  \end{cases}
  $$
\end{remark}

\begin{lemma}\label{xin-lemma}
Let $f, g\in \mathbf{L}$. Then,
\begin{itemize}
  \item[{\rm (1)}] $(f\wustar g)^{L}=(f\sqcap g)^{L}$;
  \item[{\rm (2)}] if $f(1)\wedge g(1)<1$ and $f, g\in \mathbf{L}\backslash \left\{\bm{1}_{\{1\}}\right\}$,
  $$
  (f\wustar g)^{R}(x)=
  \begin{cases}
  (f\sqcap g)^{R}(x), & x\in [0, 1), \\
  0, & x=1,
  \end{cases}
  $$
  \item[{\rm (3)}] if $f(1)\wedge g(1)=1$ and $f, g\in \mathbf{L}\backslash \left\{\bm{1}_{\{1\}}\right\}$,
  $(f\wustar g)^{R}=(f\sqcap g)^{R}$.
\end{itemize}
\end{lemma}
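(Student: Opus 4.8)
The plan is to reduce the whole statement to the single genuinely nontrivial case and then analyze the $L$- and $R$-transforms of $f\barwedge g$ directly. First I would dispose of the easy cases coming from Definition~\ref{bingstar-operation-2}. If $f=\bm{1}_{\{1\}}$ (the case $g=\bm{1}_{\{1\}}$ being symmetric), then $f\wustar g=g$, while $f\sqcap g=\bm{1}_{\{1\}}\sqcap g=g$ because $\bm{1}_{\{1\}}$ is the maximum of $\mathbf{L}$ and $\sqcap$ is the lattice meet; hence $f\wustar g=f\sqcap g$ as functions. In Case~3 with $f(1)\wedge g(1)=1$ we again have $f\wustar g=f\sqcap g$. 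In all these situations (1)--(3) are immediate; in particular this already settles (3) entirely and settles (1) outside the remaining case. Thus only Case~3 with $f(1)\wedge g(1)<1$ and $f,g\neq\bm{1}_{\{1\}}$ is left, where $f\wustar g=f\barwedge g$; this is exactly the content of (2) and the only open part of (1). Throughout this case I would record two facts: $f\barwedge g$ and $f\sqcap g$ coincide on $[0,1)$ with $(f\barwedge g)(1)=0$, and $f\sqcap g\in\mathbf{L}$ is normal with $(f\sqcap g)(1)=f(1)\wedge g(1)<1$ by Remark~\ref{remark-1}.

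For (1), since the two functions agree on $[0,1)$ and the $L$-transform at $x<1$ only sees values on $[0,x]\subset[0,1)$, I get $(f\barwedge g)^{L}(x)=(f\sqcap g)^{L}(x)$ for $x<1$. At $x=1$ the discarded value $(f\barwedge g)(1)=0$ cannot raise the supremum, so $(f\barwedge g)^{L}(1)=\sup\{(f\sqcap g)(y):y\in[0,1)\}$; normality gives $(f\sqcap g)^{L}(1)=1$, and since the endpoint value $(f\sqcap g)(1)<1$, the supremum over $[0,1)$ must already equal $1$. Hence $(f\barwedge g)^{L}(1)=1=(f\sqcap g)^{L}(1)$, proving (1).

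For (2), the endpoint is immediate: $(f\barwedge g)^{R}(1)=(f\barwedge g)(1)=0$. For $x\in[0,1)$, peeling off the point $y=1$ (whose value is $0$) gives $(f\barwedge g)^{R}(x)=\sup\{(f\sqcap g)(y):y\in[x,1)\}$, whereas $(f\sqcap g)^{R}(x)=\max\big(\sup\{(f\sqcap g)(y):y\in[x,1)\},\,f(1)\wedge g(1)\big)$. The crux is to show these agree, i.e. that omitting the endpoint does not change the supremum, and this is where I expect the main obstacle: one cannot invoke Corollary~\ref{f^{R}} directly, since $R^{1}(f\sqcap g)$ may equal $1$ even though $(f\sqcap g)(1)<1$. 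I would therefore split on $R^{1}(f\sqcap g)$. If $R^{1}(f\sqcap g)<1$, Corollary~\ref{f^{R}} applied to $f\sqcap g$ gives precisely $(f\sqcap g)^{R}(x)=\sup\{(f\sqcap g)(y):y\in[x,1)\}=(f\barwedge g)^{R}(x)$. If $R^{1}(f\sqcap g)=1$, then Proposition~\ref{f^l--f^r=1}(2) yields $(f\sqcap g)^{R}(x)=1$ for every $x\in[0,1)$; combined with the displayed max-formula and $f(1)\wedge g(1)<1$, this forces $\sup\{(f\sqcap g)(y):y\in[x,1)\}=1=(f\barwedge g)^{R}(x)$.

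In either branch $(f\barwedge g)^{R}(x)=(f\sqcap g)^{R}(x)$ on $[0,1)$, which together with the endpoint value completes (2); since (3) was already handled in the first step, this finishes the proof. The only delicate point throughout is the interior $R$-transform in (2): the temptation is to apply Corollary~\ref{f^{R}} uniformly, but because $R^{1}(f\sqcap g)=1$ can coexist with $(f\sqcap g)(1)<1$ (a convex normal function may equal $1$ up to, but not at, the right endpoint), the case analysis on $R^{1}(f\sqcap g)$ is unavoidable and is the heart of the argument.
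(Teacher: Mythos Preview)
Your proof is correct and follows essentially the same approach as the paper: dispose of the trivial cases, handle (1) by checking the endpoint $x=1$ via normality and $(f\sqcap g)(1)<1$, and handle (2) by splitting on whether $R^{1}(f\sqcap g)<1$ (invoking Corollary~\ref{f^{R}}) or $R^{1}(f\sqcap g)=1$. Your treatment of the second branch in (2) is slightly more direct than the paper's---you use the decomposition $(f\sqcap g)^{R}(x)=\max\big(\sup_{y\in[x,1)}(f\sqcap g)(y),\,(f\sqcap g)(1)\big)$ to conclude, whereas the paper first rewrites $(f\sqcap g)(z)=f^{L}(z)\vee g^{L}(z)$ on $[0,1)$ and then computes the supremum as $f^{L_{\mathrm{w}}}(1)\vee g^{L_{\mathrm{w}}}(1)=1$---but the underlying idea is the same.
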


\begin{proof}
(1) From Definition~\ref{bingstar-operation-2}, it suffices to check that, for $f, g\in \mathbf{L}$$\backslash$$\left\{\bm{1}_{\{1\}}\right\}$
with $f(1)\wedge g(1)<1$, $(f\wustar g)^{L}(1)=(f\sqcap g)^{L}(1)$. Since $f\sqcap g\in \mathbf{L}$,
one has $(f\sqcap g)^{L}(1)=1$. From the definition of $\barwedge$, it is clear that $(f\wustar g)^{L}(1)\geq
(f\barwedge g)^{L_{\mathrm{w}}}(1)=(f\sqcap g)^{L_{\mathrm{w}}}(1)$. It follows from $f(1)\wedge g(1)<1$
that $f^{L_{\mathrm{w}}}(1)\vee g^{L_{\mathrm{w}}}(1)=1$. This, together with Lemma~\ref{L-R-Operation} and
Proposition~\ref{Pro-L}, implies that
\begin{align*}
(f\wustar g)^{L}(1)&\geq(f\sqcap g)^{L_{\mathrm{w}}}(1)=\sup_{0\leq x<1}\{(f\sqcap g)^{L}(x)\}\\
&=\sup_{0\leq x<1}\{f^{L}(x)\vee g^{L}(x)\}\geq f^{L_{\mathrm{w}}}(1)\vee g^{L_{\mathrm{w}}}(1)=1.
\end{align*}
Thus, $(f\wustar g)^{L}(1)=(f\sqcap g)^{L}(1)=1$.

(2) Fix $f, g\in \mathbf{L}\backslash\left\{\bm{1}_{\{1\}}\right\}$ with $f(1)\wedge g(1)<1$ 
and let
$\xi_1=\sup\{x\in I: f^{R}(x)=1\}$ and $\xi_2=\sup\{x\in I: g^{R}(x)=1\}$. It is clear that
$(f\wustar g)^{R}(1)=0$ since $(f\wustar g)(1)=0$. For $\hat{x}\in [0, 1)$, consider the following two cases:

Case 1. If $\xi_1\wedge \xi_2<1$, applying Lemma~\ref{sup{f^R}}, it follows that $\sup\{x\in I: (f\sqcap g)^{R}(x)=1\}=\xi_1\wedge \xi_2<1$.
This, together with Corollary~\ref{f^{R}}, implies that
\begin{align*}
(f\sqcap g)^{R}(\hat{x})&=\sup\{(f\sqcap g)(y): \hat{x}\leq y<1\}\\
&=\sup\{(f\barwedge g)(y): \hat{x}\leq y\leq 1\}\ (\text{as } (f\barwedge g)(1)=0)\\
&=(f\wustar g)^{R}(\hat{x});
\end{align*}

Case 2. If $\xi_1\wedge \xi_2=1$, i.e., $\xi_1=\xi_2=1$, then $f^{R}(\hat{x})=g^{R}(\hat{x})=1$. This, together with Lemma~\ref{L-R-Operation},
implies that $(f\sqcap g)^{R}(\hat{x})=f^{R}(\hat{x})\wedge g^{R}(\hat{x})=1$. Applying $\xi_1=\xi_2=1$ and Proposition~\ref{f^l--f^r=1}
yields that, for $z\in [0, 1)$,
$f^{R}(z)=g^{R}(z)=1$, implying that $f(z)=f^{L}(z)\wedge f^{R}(z)=f^{L}(z)$ and $g(z)=g^{L}(z)\wedge g^{R}(z)=g^{L}(z)$. Thus,
$$
(f\wustar g)(x)=
\begin{cases}
(f\sqcap g)(x)=f^{L}(x)\vee g^{L}(x), & x\in [0, 1), \\
0, & x=1.
\end{cases}
$$
Noting that $f^L$ and $g^{L}$ are increasing, by applying Proposition~\ref{Pro-L}, one has
$$
(f\wustar g)^{R}(\hat{x})=\sup\{f^{L}(y)\vee g^{L}(y): \hat{x}\leq y<1\}=f^{L_{\mathrm{w}}}(1)\vee g^{L_{\mathrm{w}}}(1)=1
=(f\sqcap g)^{R}(\hat{x}).
$$

(3) From Definition~\ref{bingstar-operation-2}, these hold trivially.
\end{proof}

\begin{proposition}\label{Closed-Lemma}
For $f, g\in \mathbf{L}$, $f\wustar g$ is normal and convex, i.e., $f\wustar g\in \mathbf{L}$.
\end{proposition}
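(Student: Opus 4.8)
The plan is to reduce the whole statement to two facts already available: that $\mathbf{L}$ is closed under $\sqcap$ (so $f\sqcap g\in\mathbf{L}$ for $f,g\in\mathbf{L}$, as is used already in the proof of Lemma~\ref{xin-lemma}), and that a function $h\in\mathbf{M}$ belongs to $\mathbf{L}$ precisely when it is normal and satisfies $h=h^{L}\wedge h^{R}$ (Proposition~\ref{WW-L-function}(6)). The values $(f\wustar g)^{L}$ and $(f\wustar g)^{R}$ have already been computed in Lemma~\ref{xin-lemma}, so the proof should be a matter of assembling these pieces.

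First I would dispose of the easy cases of Definition~\ref{bingstar-operation-2}. In Cases~1 and 2, $f\wustar g$ equals $g$ or $f$, which already lies in $\mathbf{L}$. In Case~3 with $f(1)\wedge g(1)=1$ we have $f\wustar g=f\sqcap g$, which lies in $\mathbf{L}$ by closure under $\sqcap$. Thus the only substantive situation is Case~3 with $f(1)\wedge g(1)<1$ and $f,g\neq\bm{1}_{\{1\}}$, where $f\wustar g=f\barwedge g$ agrees with $f\sqcap g$ on $[0,1)$ but is forced to $0$ at the endpoint $x=1$.

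For normality in this remaining case I would use that $\sup_{x\in I}h(x)=h^{L}(1)$ for every $h\in\mathbf{M}$, so that $h$ is normal if and only if $h^{L}(1)=1$. By Lemma~\ref{xin-lemma}(1), $(f\wustar g)^{L}=(f\sqcap g)^{L}$, and since $f\sqcap g\in\mathbf{L}$ is normal we get $(f\sqcap g)^{L}(1)=1$; hence $(f\wustar g)^{L}(1)=1$ and $f\wustar g$ is normal. For convexity I would verify the identity $f\wustar g=(f\wustar g)^{L}\wedge(f\wustar g)^{R}$ pointwise. On $[0,1)$ the definition of $\barwedge$ gives $(f\wustar g)(x)=(f\sqcap g)(x)$, while Lemma~\ref{xin-lemma}(1)--(2) give $(f\wustar g)^{L}(x)=(f\sqcap g)^{L}(x)$ and $(f\wustar g)^{R}(x)=(f\sqcap g)^{R}(x)$; since $f\sqcap g$ is convex, $(f\sqcap g)(x)=(f\sqcap g)^{L}(x)\wedge(f\sqcap g)^{R}(x)$, so the identity holds on $[0,1)$. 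At $x=1$, Remark~\ref{Remark-bingstar} gives $(f\wustar g)(1)=0$, while $(f\wustar g)^{L}(1)=1$ (from the normality just shown) and $(f\wustar g)^{R}(1)=0$ (Lemma~\ref{xin-lemma}(2)), so the right-hand side equals $1\wedge 0=0$, matching. Hence $f\wustar g=(f\wustar g)^{L}\wedge(f\wustar g)^{R}$, so $f\wustar g$ is convex, and together with normality this yields $f\wustar g\in\mathbf{L}$.

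The point to watch, and really the crux of the argument, is the behavior at $x=1$ in the last case: deleting the value there would in general destroy the convex decomposition $h=h^{L}\wedge h^{R}$, but Lemma~\ref{xin-lemma}(2) ensures $(f\wustar g)^{R}(1)=0$ as well, so the reconstructed value $h^{L}(1)\wedge h^{R}(1)$ also vanishes and the identity remains consistent. This is precisely why $\barwedge$ is designed to annihilate the value at the endpoint $1$ rather than at an interior point, and it is the only place where the normality and the $R$-function computation genuinely interact.
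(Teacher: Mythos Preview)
Your proof is correct and follows exactly the approach the paper intends: the paper's own proof is a one-line reference to Definition~\ref{bingstar-operation-2} and Lemma~\ref{xin-lemma}, and you have simply spelled out the verification in full detail. The reduction to $h=h^{L}\wedge h^{R}$ via Proposition~\ref{WW-L-function}(6), together with the endpoint check at $x=1$ using Lemma~\ref{xin-lemma}(2), is precisely what ``can be verified immediately'' is pointing to.
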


\begin{proof}
By applying Definition~\ref{bingstar-operation-2} and Lemma~\ref{xin-lemma}, this can be verified immediately.
\end{proof}

\begin{remark}\label{R-24}
Proposition~\ref{Closed-Lemma} shows that the binary operation $\wustar$ is closed on $\mathbf{L}$, i.e.,
$\wustar(\mathbf{L}^2) \subset \mathbf{L}$.
\end{remark}

\begin{corollary}\label{f-bar-g-R}
For $f, g\in \mathbf{L}$, $R^{1}(f\wustar g)=R^{1}(f) \wedge R^{1}(g)$.
\end{corollary}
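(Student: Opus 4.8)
The plan is to reduce the computation to the defining cases of $\wustar$ in Definition~\ref{bingstar-operation-2} and, in each case, to read off the set $\{x\in I:(f\wustar g)^{R}(x)=1\}$ from Lemma~\ref{xin-lemma}; throughout I would use that for any $h\in\mathbf{L}$ normality gives $h^{R}\le 1$, so $R^{1}(h)=\sup\{x\in I:h^{R}(x)=1\}$. First I would dispose of Cases~1 and~2, where one argument is $\bm{1}_{\{1\}}$: here $f\wustar g$ is literally the other argument, and since $\bm{1}_{\{1\}}^{R}(x)=1$ for every $x\in I$ one has $R^{1}(\bm{1}_{\{1\}})=1$; thus $R^{1}(f\wustar g)=R^{1}(g)=1\wedge R^{1}(g)=R^{1}(f)\wedge R^{1}(g)$, and symmetrically for Case~2. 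This leaves Case~3, where $f,g\in\mathbf{L}\backslash\{\bm{1}_{\{1\}}\}$.

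In Case~3 I would split along the sign of $f(1)\wedge g(1)$. If $f(1)\wedge g(1)=1$, then Lemma~\ref{xin-lemma}(3) gives $(f\wustar g)^{R}=(f\sqcap g)^{R}$, so $R^{1}(f\wustar g)=R^{1}(f\sqcap g)$, and Lemma~\ref{sup{f^R}}(2) finishes the case by identifying this with $R^{1}(f)\wedge R^{1}(g)$. If instead $f(1)\wedge g(1)<1$, then Lemma~\ref{xin-lemma}(2) tells us that $(f\wustar g)^{R}$ coincides with $(f\sqcap g)^{R}$ on $[0,1)$ but is forced to $0$ at $x=1$. The only discrepancy between $f\wustar g$ and $f\sqcap g$, as far as $R^{1}$ is concerned, is thus at the single endpoint $x=1$.

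The main obstacle is to argue that this alteration at $x=1$ leaves $R^{1}$ unchanged. The key observation is that the endpoint was never eligible in the first place: by Remark~\ref{remark-1}, $(f\sqcap g)^{R}(1)=(f\sqcap g)(1)=f(1)\wedge g(1)<1$, so $1\notin\{x\in I:(f\sqcap g)^{R}(x)=1\}$. Consequently $\{x\in I:(f\wustar g)^{R}(x)=1\}=\{x\in[0,1):(f\sqcap g)^{R}(x)=1\}=\{x\in I:(f\sqcap g)^{R}(x)=1\}$, so these sets share a common supremum. Hence $R^{1}(f\wustar g)=R^{1}(f\sqcap g)=R^{1}(f)\wedge R^{1}(g)$ by Lemma~\ref{sup{f^R}}(2), which completes the proof in all cases.
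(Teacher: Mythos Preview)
Your proof is correct and follows essentially the same approach as the paper, which merely cites Definition~\ref{bingstar-operation-2} and Lemmas~\ref{sup{f^R}} and \ref{xin-lemma} and says the result ``can be verified immediately.'' You have simply spelled out the case analysis and the endpoint observation at $x=1$ that the paper leaves implicit.
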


\begin{proof}
By applying Definition~\ref{bingstar-operation-2} and Lemmas~\ref{sup{f^R}} and \ref{xin-lemma}, this can be verified immediately.
\end{proof}

\begin{proposition}\label{f(x=1)}
For $f, g\in \mathbf{L}\backslash\left\{\bm{1}_{\{1\}}\right\}$, $(f \wustar g)(1)=1$ if and only if $f(1)\wedge g(1)=1$.
\end{proposition}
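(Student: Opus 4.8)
The plan is to prove both directions of the equivalence $(f\wustar g)(1)=1 \iff f(1)\wedge g(1)=1$ for $f,g\in\mathbf{L}\backslash\{\bm{1}_{\{1\}}\}$, and the key observation is that this is essentially an immediate consequence of Remark~\ref{Remark-bingstar}, which already computes $(f\wustar g)(1)$ explicitly as a two-case formula. So rather than unwinding the definition of $\wustar$ from scratch, I would cite Remark~\ref{Remark-bingstar} directly.

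\begin{proof}
From Remark~\ref{Remark-bingstar}, for $f, g\in \mathbf{L}\backslash\left\{\bm{1}_{\{1\}}\right\}$,
$$
(f\wustar g)(1)=
\begin{cases}
1, & f(1)\wedge g(1)=1, \\
0, & f(1)\wedge g(1)<1.
\end{cases}
$$
If $f(1)\wedge g(1)=1$, then the first case applies and $(f\wustar g)(1)=1$. Conversely, suppose $(f\wustar g)(1)=1$. If $f(1)\wedge g(1)<1$ were to hold, then the second case would give $(f\wustar g)(1)=0\neq 1$, a contradiction. Hence $f(1)\wedge g(1)=1$. This establishes the desired equivalence.
\end{proof}

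Forward-looking, the one point worth double-checking is that the hypothesis $f,g\in\mathbf{L}\backslash\{\bm{1}_{\{1\}}\}$ of the proposition matches exactly the hypothesis under which Remark~\ref{Remark-bingstar} holds, so no extra case analysis (in particular, no appeal to Cases~1 and~2 of Definition~\ref{bingstar-operation-2}) is needed. I do not anticipate any genuine obstacle here: the content of the proposition was effectively already recorded in the remark, and the proposition merely repackages that computation as a clean biconditional that can be invoked later when verifying the $t_{lor}$-norm axioms for $\wustar$.
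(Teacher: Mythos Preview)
Your proof is correct and matches the paper's own approach exactly: the paper's proof of this proposition is the single line ``From Remark~\ref{Remark-bingstar}, this holds trivially,'' and you have simply spelled out the two directions of that trivial inference. No changes are needed.
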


\begin{proof}
From Remark~\ref{Remark-bingstar}, this holds trivially.
\end{proof}

\subsection{$\wustar$ satisfies (O1)}\label{Sub-Sec-4.1}
For $f, g\in \mathbf{L}$,

A-1) if $f=\bm{1}_{\{1\}}$ or $g=\bm{1}_{\{1\}}$, then clearly $f\wustar g=g\wustar f$;

A-2) if $f\neq \bm{1}_{\{1\}}$ and $g\neq \bm{1}_{\{1\}}$, then
$$
f\wustar g=\begin{cases}
f\sqcap g, & f(1)\wedge g(1)=1,\\
f\barwedge g, & f(1)\wedge g(1)<1.
\end{cases}
$$
Meanwhile, it can be verified that
$$
g\wustar f=\begin{cases}
g\sqcap f, & g(1)\wedge f(1)=1, \\
g\barwedge f, & g(1)\wedge f(1)<1.
\end{cases}
$$
Thus, $f\wustar g=g\wustar f$.

\begin{lemma}\label{Increasing-Lemma}
Let $f, g\in \mathbf{L}$. Then, $f\wustar g \sqsubseteq f$ and $f\wustar g\sqsubseteq g$. In particular, $f\wustar g\neq \bm{1}_{\{1\}}$
if $f, g\in \mathbf{L}\backslash \left\{\bm{1}_{\{1\}}\right\}$.
\end{lemma}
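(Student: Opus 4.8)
The plan is to reduce the order assertions to the $L$/$R$ characterization of $\sqsubseteq$ supplied by Theorem~\ref{order-theorem}: for elements of $\mathbf{L}$, $\phi\sqsubseteq\psi$ holds iff $\psi^{L}\leq\phi^{L}$ and $\phi^{R}\leq\psi^{R}$. Applying this with $\phi=f\wustar g$ and $\psi=f$, proving $f\wustar g\sqsubseteq f$ amounts to verifying the two pointwise inequalities $f^{L}\leq(f\wustar g)^{L}$ and $(f\wustar g)^{R}\leq f^{R}$, and symmetrically with $g$ in place of $f$. Since the two envelopes of $f\wustar g$ have already been computed in Lemma~\ref{xin-lemma}, the argument becomes almost mechanical once the trivial boundary cases are disposed of.

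First I would treat the degenerate cases separately. If $f=\bm{1}_{\{1\}}$, then $f\wustar g=g$, so $f\wustar g\sqsubseteq g$ is just reflexivity of $\sqsubseteq$, while $f\wustar g=g\sqsubseteq\bm{1}_{\{1\}}=f$ holds because $\bm{1}_{\{1\}}$ is the maximum of the lattice $(\mathbf{L},\sqsubseteq)$; the case $g=\bm{1}_{\{1\}}$ is symmetric. This leaves Case~3, where $f,g\in\mathbf{L}\setminus\{\bm{1}_{\{1\}}\}$. For the $L$-part in Case~3, Lemma~\ref{xin-lemma}(1) gives $(f\wustar g)^{L}=(f\sqcap g)^{L}$, and Lemma~\ref{L-R-Operation}(i) identifies this with $f^{L}\vee g^{L}$; since $f^{L}\vee g^{L}\geq f^{L}$ and $f^{L}\vee g^{L}\geq g^{L}$, both required left-envelope inequalities hold simultaneously.

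For the $R$-part I would split according to the value of $f(1)\wedge g(1)$. If $f(1)\wedge g(1)=1$, then Lemma~\ref{xin-lemma}(3) together with Lemma~\ref{L-R-Operation}(ii) gives $(f\wustar g)^{R}=f^{R}\wedge g^{R}$, which is $\leq f^{R}$ and $\leq g^{R}$. If $f(1)\wedge g(1)<1$, then Lemma~\ref{xin-lemma}(2) gives $(f\wustar g)^{R}(x)=f^{R}(x)\wedge g^{R}(x)$ for $x\in[0,1)$ and $(f\wustar g)^{R}(1)=0$; in either case $(f\wustar g)^{R}\leq f^{R}$ and $(f\wustar g)^{R}\leq g^{R}$ pointwise, the only point needing a separate glance being $x=1$, where the value $0$ is dominated trivially. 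Feeding these four inequalities into Theorem~\ref{order-theorem} yields $f\wustar g\sqsubseteq f$ and $f\wustar g\sqsubseteq g$.

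Finally, for the ``in particular'' clause I would argue by contradiction using the order relation just established, rather than recomputing anything. Suppose $f,g\in\mathbf{L}\setminus\{\bm{1}_{\{1\}}\}$ but $f\wustar g=\bm{1}_{\{1\}}$. Then $f\wustar g\sqsubseteq f$ reads $\bm{1}_{\{1\}}\sqsubseteq f$; since $\bm{1}_{\{1\}}$ is the maximum of $\mathbf{L}$ we also have $f\sqsubseteq\bm{1}_{\{1\}}$, and antisymmetry of $\sqsubseteq$ forces $f=\bm{1}_{\{1\}}$, contradicting the hypothesis. I do not anticipate a genuine obstacle: the whole proof is an application of Theorem~\ref{order-theorem} to the envelope formulas of Lemma~\ref{xin-lemma}, and the only places demanding care are keeping the orientation of the $L$/$R$ inequalities in Theorem~\ref{order-theorem} correct and handling the endpoint $x=1$ in the case $f(1)\wedge g(1)<1$.
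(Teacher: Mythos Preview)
Your proposal is correct and follows essentially the same approach as the paper: handle the boundary cases $f=\bm{1}_{\{1\}}$ or $g=\bm{1}_{\{1\}}$ directly, and in Case~3 use Lemma~\ref{xin-lemma} to compare the $L$- and $R$-envelopes of $f\wustar g$ with those of $f$ and $g$, then invoke the $L$/$R$ characterization of $\sqsubseteq$. The paper streamlines two small points---it uses commutativity (O1) to reduce to proving only $f\wustar g\sqsubseteq f$, and it bounds $(f\wustar g)^{R}\leq(f\sqcap g)^{R}$ in one stroke rather than splitting on $f(1)\wedge g(1)$---and it leaves the ``in particular'' clause implicit, whereas you spell it out; but these are presentational differences, not different arguments.
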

\begin{proof}
Since $\wustar$ satisfies (O1), it suffices to check that $f\wustar g \sqsubseteq f$.
Consider the following three cases:

Case 1. If $f=\bm{1}_{\{1\}}$, then $f\wustar g=g\sqsubseteq \bm{1}_{\{1\}}=f$;

Case 2. If $g=\bm{1}_{\{1\}}$, then $f\wustar g=f \sqsubseteq f$;

Case 3. If $f\neq \bm{1}_{\{1\}}$ and $g\neq \bm{1}_{\{1\}}$, from (\ref{xin-operation}), Lemmas~\ref{xin-lemma},
  and $f\sqcap g\sqsubseteq f$, it follows that $(f\wustar g)^{L}=(f\sqcap g)^{L}\geq f^{L}$ and
  $(f\wustar g)^{R}\leq (f\sqcap g)^{R}\leq f^{R}$. This, together with Proposition~\ref{WW-L-function}, implies that
  $f\wustar g \sqsubseteq f$.
\end{proof}

\subsection{$\wustar$ satisfies (O2)}\label{S-4B}
For $f, g, h\in \mathbf{L}$,

B-1) if one of $f$, $g$, and $h$ is equal to $\bm{1}_{\{1\}}$, then it is easy to verify that $(f\wustar g) \wustar h
  =f\wustar (g\wustar h)$;

B-2) if none of $f$, $g$, and $h$ are equal to $\bm{1}_{\{1\}}$, from Lemmas~\ref{L-R-Operation} and \ref{xin-lemma},
it follows that
$$
((f\wustar g)\wustar h)^{L}=((f\wustar g) \sqcap h)^{L}
=(f\wustar g)^{L}\vee h^{L}= f^{L}\vee g^{L} \vee h^{L},
$$
$$
(f\wustar (g\wustar h))^{L}=(f\sqcap (g\wustar h))^{L}
=f^{L}\vee (g\wustar h)^{L}= f^{L}\vee g^{L} \vee h^{L},
$$
and, for $x\in [0, 1)$,
\begin{align*}
((f\wustar g) \wustar h)^{R}(x)&=((f\wustar g)\sqcap h)^{R}(x)=
(f\wustar g)^{R}(x) \wedge h^{R}(x)\\
&=(f\sqcap g)^{R}(x)\wedge h^{R}(x)
=f^{R}(x)\wedge g^{R}(x) \wedge h^{R}(x),
\end{align*}
\begin{align*}
(f\wustar (g\wustar h))^{R}(x)&=(f \sqcap (g\wustar h))^{R}(x)=f^{R}(x)
\wedge (g\wustar h)^{R}(x)\\
&=f^{R}(x)\wedge (g\sqcap h)^{R}(x)=
f^{R}(x)\wedge g^{R}(x) \wedge h^{R}(x).
\end{align*}
These imply that
$$
((f\wustar g)\wustar h)^{L}=(f\wustar (g\wustar h))^{L},
$$
and, for $x\in [0, 1)$,
$$
((f\wustar g) \wustar h)^{R}(x)=(f\wustar (g\wustar h))^{R}(x).
$$

To prove $(f\wustar g) \wustar h=f\wustar (g\wustar h)$, by applying Proposition~\ref{WW-L-function}--(6)
and Proposition~\ref{Closed-Lemma}, it suffices to check that $((f\wustar g) \wustar h)^{R}(1)
=(f\wustar (g\wustar h))^{R}(1)$. From Remark~\ref{Remark-bingstar}, Lemma~\ref{Increasing-Lemma}, and proposition~\ref{f(x=1)},
it follows that
\begin{equation}\label{Eq-5}
\begin{split}
&\quad((f\wustar g)\wustar h)(1)\\
&=
\begin{cases}
1, & (f\wustar g)(1)\wedge h(1)=1, \\
0, & (f\wustar g)(1)\wedge h(1)<1,
\end{cases}\\
&=
\begin{cases}
1, & f(1)\wedge g(1)\wedge h(1)=1, \\
0, & f(1)\wedge g(1)\wedge h(1)<1,
\end{cases}
\end{split}
\end{equation}
and
\begin{equation}\label{Eq-6}
\begin{split}
&\quad (f\wustar (g\wustar h))(1)\\
&=
\begin{cases}
1, & f(1)\wedge (h\wustar g)(1)=1, \\
0, & f(1)\wedge (h\wustar g)(1)<1.
\end{cases}\\
&=
\begin{cases}
1, & f(1)\wedge g(1)\wedge h(1)=1, \\
0, & f(1)\wedge g(1)\wedge h(1)<1.
\end{cases}
\end{split}
\end{equation}
Thus, $(f\wustar g) \wustar h=f\wustar (g\wustar h)$.

\subsection{$\wustar$ satisfies (O3)}

This follows directly from Cases 1 and 2 of Definition~\ref{bingstar-operation-2}.

\subsection{$\wustar$ satisfies (O4$^{'}$)}

For $f, g, h\in \mathbf{L}$, a claim is that $f\wustar (g\sqcup h)=(f\wustar g) \sqcup (f\wustar h)$.
In fact, the following are true:

D-1) If $f=\bm{1}_{\{1\}}$, then $f\wustar (g\sqcup h)=g\sqcup h=(\bm{1}_{\{1\}}\wustar g) \sqcup (\bm{1}_{\{1\}}\wustar h)
=(f\wustar g) \sqcup (f\wustar h)$.

D-2) If $g=\bm{1}_{\{1\}}$, then $f\wustar (g\sqcup h)=f\wustar \bm{1}_{\{1\}}=f$. From Lemma~\ref{Increasing-Lemma}, it follows that
$f\wustar h\sqsubseteq f$. This implies that $(f\wustar g) \sqcup (f\wustar h)=f \sqcup (f\wustar h)=f$. Thus,
$f\wustar (g\sqcup h)=(f\wustar g) \sqcup (f\wustar h)$.

D-3) If $h=\bm{1}_{\{1\}}$, since $\sqcup$ is commutative, applying D-2) yields that $f\wustar (g\sqcup h)=f\wustar (h\sqcup g)
=(f\wustar h)\sqcup (f\wustar g)=(f\wustar g) \sqcup (f \wustar h)$.

D-4) If $f\neq\bm{1}_{\{1\}}$, $g\neq \bm{1}_{\{1\}}$, and $h\neq \bm{1}_{\{1\}}$, applying Lemmas~\ref{L-R-Operation}
and \ref{xin-lemma}, it can be verified that
$$
(f\wustar (g\sqcup h))^{L}=((f\wustar g) \sqcup (f\wustar h))^{L},
$$
and, for $x\in [0, 1)$,
$$
(f\wustar (g\sqcup h))^{R}(x)=((f\wustar g) \sqcup (f\wustar h))^{R}(x).
$$
To prove that $f\wustar (g\sqcup h)=(f\wustar g) \sqcup (f\wustar h)$, applying Proposition~\ref{WW-L-function}--(6) and Remark~\ref{remark-1},
it suffices to check that $(f\wustar (g\sqcup h))(1)=((f\wustar g) \sqcup (f\wustar h))(1)$.

Applying Remarks~\ref{remark-1} and \ref{Remark-bingstar} yields that
\begin{align*}
&\quad (f\wustar (g\sqcup h))(1)\\
&=\begin{cases}
0, & f(1)\wedge (g\sqcup h)(1)<1,\\
1, & f(1)\wedge (g\sqcup h)(1)=1,
\end{cases}\\
&=\begin{cases}
0, & f(1)\wedge (g(1)\vee h(1))<1,\\
1, & f(1)\wedge (g(1)\vee h(1))<1,
\end{cases}
\end{align*}
and
\begin{align*}
&\quad ((f\wustar g) \sqcup (f\wustar h))(1)\\
&=\begin{cases}
0, & (f\wustar g)(1)\vee (f\wustar h)(1)<1,\\
1, & (f\wustar g)(1)\vee (f\wustar h)(1)=1,
\end{cases}\\
&=\begin{cases}
0, & (f(1)\wedge g(1))\vee (f(1)\wedge h(1))<1,\\
1, & (f(1)\wedge g(1))\vee (f(1)\wedge h(1))=1,
\end{cases}
\end{align*}
Therefore, $f\wustar (g\sqcup h)=(f\wustar g) \sqcup (f\wustar h)$.

\subsection{$\wustar$ satisfies (O4$^{''}$)}

For $f, g, h\in \mathbf{L}$, a claim is that $f\wustar (g\sqcap h)=(f\wustar g) \sqcap (f\wustar h)$.
In fact, the following are true:

E-1)If $f=\bm{1}_{\{1\}}$, then $f\wustar (g\sqcap h)=g\sqcap h=(\bm{1}_{\{1\}}\wustar g) \sqcap (\bm{1}_{\{1\}}\wustar h)
=(f\wustar g)\sqcap (f\wustar h)$.

E-2) If $g=\bm{1}_{\{1\}}$, then $f\wustar (g\sqcap h)=f\wustar h$. From Lemma~\ref{Increasing-Lemma}, it follows that
$f\wustar h\sqsubseteq f$. This implies that $(f\wustar g) \sqcap (f\wustar h)=f \sqcap (f\wustar h)=f\wustar h$.
Thus, $f\wustar (g\sqcap h)=(f\wustar g) \sqcap (f\wustar h)$.

E-3) If $h=\bm{1}_{\{1\}}$, since $\sqcap$ is commutative, applying E-2) yields that $f\wustar (g\sqcap h)=f\wustar (h\sqcap g)
=(f\wustar h)\sqcap (f\wustar g)=(f\wustar g) \sqcap (f \wustar h)$.

E-4) If $f\neq\bm{1}_{\{1\}}$, $g\neq \bm{1}_{\{1\}}$, and $h\neq \bm{1}_{\{1\}}$, applying Lemmas~\ref{L-R-Operation}
and \ref{xin-lemma}, it can be verified that
$$
(f\wustar (g\sqcap h))^{L}=((f\wustar g) \sqcap (f\wustar h))^{L},
$$
and, for $x\in [0, 1)$,
$$
(f\wustar (g\sqcap h))^{R}(x)=((f\wustar g) \sqcap (f\wustar h))^{R}(x).
$$
To prove that $f\wustar (g\sqcap h)=(f\wustar g) \sqcap (f\wustar h)$, applying Proposition~\ref{WW-L-function}--(6), it suffices to
check that $(f\wustar (g\sqcap h))(1)=((f\wustar g) \sqcap (f\wustar h))(1)$.

Applying Remarks~\ref{remark-1} and \ref{Remark-bingstar} yields that
\begin{align*}
&\quad (f\wustar (g\sqcap h))(1)\\
&=\begin{cases}
0, & f(1)\wedge (g\sqcap h)(1)<1,\\
1, & f(1)\wedge (g\sqcap h)(1)=1,
\end{cases}\\
&=\begin{cases}
0, & f(1)\wedge g(1)\wedge h(1)<1,\\
1, & f(1)\wedge g(1)\wedge h(1)<1,
\end{cases}
\end{align*}
and
\begin{align*}
&\quad ((f\wustar g) \sqcap (f\wustar h))(1)\\
&=\begin{cases}
0, & (f\wustar g)(1)\wedge (f\wustar h)(1)<1,\\
1, & (f\wustar g)(1)\wedge (f\wustar h)(1)=1,
\end{cases}\\
&=\begin{cases}
0, & (f(1)\wedge g(1))\wedge (f(1)\wedge h(1))<1,\\
1, & (f(1)\wedge g(1))\wedge (f(1)\wedge h(1))=1,
\end{cases}
\end{align*}
Thus,
$f\wustar (g\sqcap h)=(f\wustar g) \sqcap (f\wustar h)$.

\subsection{$\wustar$ satisfies (O5)}

For $0\leq a\leq b\leq 1$,

F-1) if $a=1$, then $\bm{1}_{[0, 1]}\wustar \bm{1}_{[a, b]}=\bm{1}_{[0, 1]}\wustar \bm{1}_{\{1\}}=\bm{1}_{[0, 1]}$;

F-2) if $a<1$, then consider the following two cases:
\begin{enumerate}[(i)]
  \item $b<1$, then $\bm{1}_{[0, 1]}(1)\wedge \bm{1}_{[a, b]}(1)=0<1$. This, together with Definition~\ref{bingstar-operation-2},
  implies that
  $$
  \bm{1}_{[0, 1]}\wustar \bm{1}_{[a, b]}=\bm{1}_{[0, 1]}\barwedge \bm{1}_{[a, b]}.
  $$
  Since $(\bm{1}_{[0, 1]}\sqcap \bm{1}_{[a, b]})(1)=0$, one has
  $$
  \bm{1}_{[0, 1]}\wustar \bm{1}_{[a, b]}=\bm{1}_{[0, 1]}\sqcap \bm{1}_{[a, b]}=\bm{1}_{[0, b]};
  $$
  \item $b=1$, then $\bm{1}_{[0, 1]}(1)\wedge \bm{1}_{[a, b]}(1)=1$. This, together with Definition~\ref{bingstar-operation-2},
  implies that
  $$
\bm{1}_{[0, 1]}\wustar \bm{1}_{[a, b]}=\bm{1}_{[0, 1]}\sqcap \bm{1}_{[a, b]}=
\bm{1}_{[0, b]}.
$$
\end{enumerate}

\subsection{$\wustar$ satisfies (O6)}
For $x_1, x_2\in I$, consider the following two cases:

Case 1. If $x_1=1$ or $x_2=1$, it is clear that $\bm{1}_{\{x_1\}}\wustar \bm{1}_{\{x_2\}}=\bm{1}_{\{x_2\}}\wustar \bm{1}_{\{x_1\}}\in \mathbf{J}$.

Case 2. If $x_1\neq 1$ and $x_2\neq 1$, from Definition~\ref{bingstar-operation-2}, it can be verified that
$\bm{1}_{\{x_1\}}\wustar \bm{1}_{\{x_2\}}=\bm{1}_{\{x_2\}}\wustar \bm{1}_{\{x_1\}}=\bm{1}_{\{x_1\}}\sqcap \bm{1}_{\{x_2\}}=\bm{1}_{\{x_1\wedge x_2\}}\in \mathbf{J}$.

\subsection{$\wustar$ satisfies (O7)}\label{Sub-Sec-4.7}
For $[a_1, b_1], [a_2, b_2]\subset I$ with $[a_1, b_1]\neq \{1\}$ and $[a_2, b_2]\neq \{1\}$,
from Definition~\ref{bingstar-operation-2}, it follows that
\begin{align*}
&\quad \bm{1}_{[a_1, b_1]}\wustar \bm{1}_{[a_2, b_2]}\\
&=
\begin{cases}
\bm{1}_{[a_1, b_1]}\sqcap \bm{1}_{[a_2, b_2]}, & b_1=1 \text{ and } b_2=1, \\
\bm{1}_{[a_1, b_1]}\barwedge \bm{1}_{[a_2, b_2]}, & b_1<1 \text{ or } b_2<1,
\end{cases}\\
&=
\begin{cases}
\bm{1}_{[a_1\wedge a_2, b_1\wedge b_2]}, & b_1=1 \text{ and } b_2=1, \\
\bm{1}_{[a_1\wedge a_2, b_1\wedge b_2]}, & b_1<1 \text{ or } b_2<1,
\end{cases}\\
&=\bm{1}_{[a_1\wedge a_2, b_1\wedge b_2]}\in \mathbf{K}.
\end{align*}
This, together with the commutativity of $\wustar$, implies that
$$
\bm{1}_{[a_1, b_1]}\wustar \bm{1}_{[a_2, b_2]}=\bm{1}_{[a_2, b_2]}\wustar \bm{1}_{[a_1, b_1]}=
\bm{1}_{[a_1\wedge a_2, b_1\wedge b_2]}\in \mathbf{K}.
$$
Clearly, $\bm{1}_{[a_1, b_1]}\wustar \bm{1}_{[a_2, b_2]}=\bm{1}_{[a_2, b_2]}\wustar \bm{1}_{[a_1, b_1]}\in \mathbf{K}$
when $[a_1, b_1]=\{1\}$ or $[a_2, b_2]=\{1\}$.

\medskip

Combining \ref{Sub-Sec-4.1}--\ref{Sub-Sec-4.7} together immediately yields the following result.

\medskip

\begin{theorem}\label{t{lor}-norm-theorem}
The binary operation $\wustar$ is a $t_{lor}$-norm on $\mathbf{L}$. In particular,
$\wustar$ is a $t_{r}$-norm on $\mathbf{L}$.
\end{theorem}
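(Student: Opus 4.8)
The plan is to verify that $\wustar$ satisfies, one axiom at a time, each of the eight defining conditions of a $t_{lor}$-norm—namely (O1), (O2), (O3), (O5), (O6), (O7), (O4$^{\prime}$), and (O4$^{\prime\prime}$)—and then to deduce the $t_r$-norm assertion as a consequence. Before any axiom is checked, I would record that $\wustar$ is well defined as an operation $\mathbf{L}^2\to\mathbf{L}$: this is precisely Proposition~\ref{Closed-Lemma}, whose proof rests on Lemma~\ref{xin-lemma}. The decisive structural fact extracted there is that $(f\wustar g)^{L}=(f\sqcap g)^{L}$ always holds, while $(f\wustar g)^{R}$ coincides with $(f\sqcap g)^{R}$ on $[0,1)$ but is forced to $0$ at $x=1$ in the ``small'' branch $f(1)\wedge g(1)<1$. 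This clean separation into \emph{$L$-function}, \emph{$R$-function on $[0,1)$}, and \emph{value at $x=1$} is the organizing principle for the entire argument.

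The routine axioms I would dispatch first. Axiom (O3) is immediate from Cases~1 and 2 of Definition~\ref{bingstar-operation-2}. Axiom (O1) follows from a short case split on whether either argument equals $\bm{1}_{\{1\}}$, using the symmetry of $\sqcap$, $\barwedge$, and $\wedge$. Axiom (O5) is a direct computation on characteristic functions, the only subtlety being the boundary between the $\sqcap$ and $\barwedge$ branches, which is resolved by observing that $(\bm{1}_{[0,1]}\sqcap\bm{1}_{[a,b]})(1)=0$ when $b<1$. Axioms (O6) and (O7) reduce, via Definition~\ref{bingstar-operation-2}, to the single identity $\bm{1}_{[a_1,b_1]}\wustar\bm{1}_{[a_2,b_2]}=\bm{1}_{[a_1\wedge a_2,\,b_1\wedge b_2]}$ on generators, both branches of the definition giving the same interval.

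The substantive work lies in (O2), (O4$^{\prime}$), and (O4$^{\prime\prime}$). For each, in the nontrivial case where no argument is $\bm{1}_{\{1\}}$, I would first show the two sides agree on their $L$-functions and on their $R$-functions restricted to $[0,1)$; by Lemma~\ref{L-R-Operation} and Lemma~\ref{xin-lemma} this is mechanical, since there $(f\wustar g)^{L}=f^{L}\vee g^{L}$ and $(f\wustar g)^{R}=f^{R}\wedge g^{R}$. By Proposition~\ref{WW-L-function}(6), a convex function is determined by $f^L\wedge f^R$, so it then suffices to match the value at the single point $x=1$. This endpoint is exactly where $\wustar$ departs from $\sqcap$, and it is the main obstacle. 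The device that overcomes it is Remark~\ref{Remark-bingstar}, which collapses $(f\wustar g)(1)$ to the $0/1$ dichotomy ``$f(1)\wedge g(1)=1$''. Propagating this through a composition (as in \eqref{Eq-5}--\eqref{Eq-6}) shows both $((f\wustar g)\wustar h)(1)$ and $(f\wustar(g\wustar h))(1)$ equal the indicator of $f(1)\wedge g(1)\wedge h(1)=1$, giving associativity; the parallel bookkeeping with $\vee$ and $\wedge$ yields the two distributive laws.

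Finally, for the ``in particular'' clause, I would invoke the established hierarchy, proved in \cite{WC-T-Norm}, that every $t_{lor}$-norm on $\mathbf{L}$ is a $t_r$-norm. Alternatively, one checks the monotonicity axiom (O4) directly from (O4$^{\prime\prime}$) and commutativity: if $f\sqsubseteq g$ then $f\sqcap g=f$, so $f\wustar h=(f\sqcap g)\wustar h=(f\wustar h)\sqcap(g\wustar h)$, which is exactly $f\wustar h\sqsubseteq g\wustar h$. Either route completes the proof.
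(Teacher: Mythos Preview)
Your proposal is correct and follows essentially the same approach as the paper: both verify the axioms one by one, use Lemma~\ref{xin-lemma} to reduce the nontrivial cases of (O2), (O4$'$), (O4$''$) to matching $L$-functions, $R$-functions on $[0,1)$, and the single endpoint value via Remark~\ref{Remark-bingstar}, and both handle the ``in particular'' clause through the $t_{lor}$-norm $\Rightarrow$ $t_r$-norm hierarchy of \cite{WC-T-Norm}. The only minor addition in your write-up is the explicit alternative derivation of (O4) directly from (O4$''$), which the paper does not spell out.
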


\section{$\wustar$ cannot be obtained by $\curlywedge$}\label{S-V}

This section proves that the $t_{lor}$-norm $\wustar$ constructed in Section~\ref{S-iV}
cannot be obtained by operations $\curlywedge$. This shows that the $t_{lor}$-norm $\wustar$
is not the convolution of each $t$-norm on $I$, answering negatively Question~\ref{Q-2}.

The following theorem provides a sufficient condition ensuring that $\ast$ is a $t$-norm on $I$.

\begin{theorem}{\rm\cite[Theorem~12]{WC-T-Norm}}\label{WC-T-Norm}
Let $\ast$ be a binary operation on $I$ and $\vartriangle$ be a $t$-norm on $I$.
If the binary operation $\curlywedge$ is a $t_r$-norm on $\mathbf{L}$, then
$\vartriangle$ is a continuous $t$-norm and $\ast$ is a $t$-norm.
\end{theorem}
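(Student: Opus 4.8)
The plan is to recover the two point operations $\vartriangle$ and $\ast$ from the behaviour of the convolution $\curlywedge$ on the distinguished families $\mathbf{J}$, $\mathbf{K}$, and a family of ``probe'' functions, exploiting that $\curlywedge$ inherits all $t_r$-norm axioms (O1)--(O7). First I would read off the corner values of $\ast$ from closure on $\mathbf{J}$. For $x,y\in I$,
$$
\left(\bm{1}_{\{x\}}\curlywedge\bm{1}_{\{y\}}\right)(w)=\sup\left\{\bm{1}_{\{x\}}(u)\ast\bm{1}_{\{y\}}(v):u\vartriangle v=w\right\},
$$
and since the indicators take values only in $\{0,1\}$, axiom (O6) forces $\ast$ to behave on $\{0,1\}^{2}$ exactly like classical conjunction: the result must be a single-point indicator, which is impossible unless $1\ast1=1$ and $1\ast0=0\ast1=0\ast0=0$. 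Consequently $\bm{1}_{\{x\}}\curlywedge\bm{1}_{\{y\}}=\bm{1}_{\{x\vartriangle y\}}$.

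With the corner values in hand I would establish continuity of $\vartriangle$ from closure on $\mathbf{K}$ (axiom (O7)). Because the indicator values again lie in $\{0,1\}$ and $0$ is absorbing, the convolution $\bm{1}_{[a_1,b_1]}\curlywedge\bm{1}_{[a_2,b_2]}$ equals $1$ at exactly those $w$ of the form $u\vartriangle v$ with $u\in[a_1,b_1]$, $v\in[a_2,b_2]$, and $0$ elsewhere; that is, its support is the image $\vartriangle\!\left([a_1,b_1]\times[a_2,b_2]\right)$. Axiom (O7) demands this support be an interval for every subrectangle, including the degenerate ones $\{a\}\times[0,1]$, for which the support is the range of $v\mapsto a\vartriangle v$. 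Thus every section of the monotone operation $\vartriangle$ is surjective onto its own range and hence continuous, and for a $t$-norm sectional continuity is equivalent to joint continuity. This gives the first half of the conclusion.

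To prove that $\ast$ is a $t$-norm I would transfer axioms (O1)--(O4) from $\curlywedge$ to $\ast$ by evaluating the convolution on normal convex functions that ``carry'' prescribed values. Concretely, for each $\alpha\in I$ and each interior point $s$ I would build $f\in\mathbf{L}$ with $f(s)=\alpha$ whose profile is arranged so that, at the argument $w=s\vartriangle t$, the supremum defining $f\curlywedge g$ is attained at the single pair $(s,t)$ and equals $\alpha\ast\beta$. Feeding such probes into the neutral-element axiom (O3) yields $1\ast\alpha=\alpha$; into commutativity (O1) yields $\alpha\ast\beta=\beta\ast\alpha$; into associativity (O2) yields associativity of $\ast$; and into monotonicity (O4) yields that $\ast$ is increasing. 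Together these are precisely (T1)--(T4), so $\ast$ is a $t$-norm.

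The main obstacle is the probe construction underlying the last paragraph. Because every member of $\mathbf{L}$ must be normal (its supremum equals $1$) and convex, one cannot simply use scaled indicators to isolate a single value of $\ast$; the height $1$ required for normality must be ``parked'' at a point such as $y=1$ that does not interfere with the supremum at $w=s\vartriangle t$, and one must prove that every competing pair $(u,v)$ with $u\vartriangle v=w$ contributes no more than $\alpha\ast\beta$. A further wrinkle is that (O4) is monotonicity for $\sqsubseteq$, not pointwise, so the probes carrying ordered values must themselves be $\sqsubseteq$-comparable. Making this domination rigorous, together with the supremum argument near a putative jump of $\vartriangle$ in the continuity step---where one must show the supremum ``fills the gap'' so that a genuine discontinuity would destroy the interval-closure (O7)---is the technical heart of the proof.
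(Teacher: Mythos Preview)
The paper does not contain a proof of this theorem. It is quoted verbatim from the authors' companion manuscript \cite{WC-T-Norm} (their Theorem~12) and is used here only as an ingredient in the proof of Theorem~\ref{Thm-30}. Consequently there is no ``paper's own proof'' to compare your proposal against.

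On its own merits your outline is reasonable and follows the natural line of attack: use (O6) on $\mathbf{J}$ to pin down $\ast$ on $\{0,1\}^{2}$, use (O7) on $\mathbf{K}$ to force each section of $\vartriangle$ to have interval range (hence be continuous), and then use probe functions in $\mathbf{L}$ to transfer (O1)--(O4) from $\curlywedge$ back to $\ast$. Two points deserve more care than you give them. First, in the continuity step you must argue that the support of $\bm{1}_{[a_1,b_1]}\curlywedge\bm{1}_{[a_2,b_2]}$ is \emph{exactly} $\vartriangle([a_1,b_1]\times[a_2,b_2])$; this already uses the corner values of $\ast$ established in step one, and you should also check that the result actually lies in $\mathbf{L}$ (otherwise (O7) is vacuous). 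Second, as you yourself flag, the probe construction is the crux: you need explicit normal convex functions $f_{\alpha}$ for which $(f_{\alpha}\curlywedge f_{\beta})(w)$ at a suitably chosen $w$ isolates $\alpha\ast\beta$, and you must verify that the $\sqsubseteq$-order on these probes matches the usual order on their parameters so that (O4) transfers to monotonicity of $\ast$. Your sketch gestures at placing the height-$1$ peak at an endpoint to avoid interference, but does not exhibit the functions or the domination argument; until that is written out, the last paragraph is a plan rather than a proof.
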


\begin{proposition}\label{1-Lemma}
Let $\ast$ be a $t$-norm on $I$. Then, $x\ast y=1$ if and only if $x=y=1$.
\end{proposition}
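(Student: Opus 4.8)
The plan is to dispose of the two directions separately, with essentially all the content lying in the forward implication. The backward direction is immediate: if $x=y=1$, then axiom (T4) gives $x\ast y=1\ast 1=1$, so nothing further is needed there.

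For the forward direction, I would first record the standard boundary inequality for $t$-norms, namely that $x\ast y\leq x\wedge y$ for all $x,y\in I$. This follows purely from monotonicity (T3) and the neutral element axiom (T4): since $y\leq 1$, increasingness in the second argument yields $x\ast y\leq x\ast 1=x$, and by the commutativity (T1) (or symmetrically, increasingness in the first argument together with $x\leq 1$) one gets $x\ast y\leq 1\ast y=y$. Combining these gives $x\ast y\leq x\wedge y$.

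With this inequality in hand, the conclusion is forced. Assuming $x\ast y=1$, I would chain
$$
1=x\ast y\leq x\wedge y\leq 1,
$$
so that $x\wedge y=1$. Since the meet of $x,y\in[0,1]$ equals $1$ precisely when both entries equal $1$, this yields $x=y=1$, completing the argument.

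I do not expect any genuine obstacle here; the only thing worth emphasizing is the derivation of $x\ast y\leq x\wedge y$ from (T3) and (T4), which is the single substantive step and which will be reused implicitly in the subsequent analysis of when the convolution $\curlywedge$ can reproduce $\wustar$.
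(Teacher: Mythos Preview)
Your argument is correct and is the standard proof of this elementary fact. The paper itself states this proposition without proof, treating it as a well-known property of $t$-norms, so there is nothing to compare against; your derivation of $x\ast y\leq x\wedge y$ from (T3) and (T4) is exactly the expected justification.
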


\begin{theorem}\label{Thm-30}
For any binary operation $\ast$ on $I$ and any $t$-norm $\vartriangle$ on $I$,
there exist $f, g\in \mathbf{L}$ such that $f\wustar g \neq f\curlywedge g$. In particular,
$\wustar$ is not the convolution of each $t$-norm on $I$.
\end{theorem}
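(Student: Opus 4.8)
The plan is to pit the one anomalous feature of $\wustar$ against the rigidity of a convolution. By Remark~\ref{Remark-bingstar}, $(f\wustar g)(1)$ drops to $0$ as soon as $f(1)\wedge g(1)<1$, even though $f\wustar g$ still coincides with $f\sqcap g$ on $[0,1)$. The matching fact for $\curlywedge$ is that, for every $t$-norm $\vartriangle$, monotonicity and neutrality give $y\vartriangle z\leq y\wedge z$, so $y\vartriangle z=1$ forces $y=z=1$; hence $(f\curlywedge g)(1)=f(1)\ast g(1)$ for all $f,g$. Reading off the convolution at the critical point $x=1$ through this identity is the engine of the argument.

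First I would dispatch the trivial half of a dichotomy. If $\curlywedge$ fails to be a $t_r$-norm on $\mathbf{L}$ (including failure of closedness), then it cannot agree with $\wustar$ on all of $\mathbf{L}^2$, for otherwise its restriction would inherit all the axioms verified for $\wustar$ in Theorem~\ref{t{lor}-norm-theorem}; thus some $f,g$ already witness $f\wustar g\neq f\curlywedge g$. So I may assume $\curlywedge$ is a $t_r$-norm, and Theorem~\ref{WC-T-Norm} then supplies the extra structure for free: $\vartriangle$ is a continuous $t$-norm and $\ast$ is a $t$-norm, so $0$ is absorbing and $1$ is neutral for $\ast$ and Proposition~\ref{1-Lemma} is in force.

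Now split on $\vartriangle$. If $\vartriangle\neq\wedge$, then $\wedge$ being the unique idempotent $t$-norm yields $a_0\in(0,1)$ with $a_0\vartriangle a_0<a_0$; putting $f=g=\bm{1}_{\{a_0\}}\in\mathbf{J}\subseteq\mathbf{L}$, absorption of $\ast$ kills every term $f(y)\ast g(z)$ except at $y=z=a_0$, so $f\curlywedge g=\bm{1}_{\{a_0\vartriangle a_0\}}$, whereas $f\wustar g=f\sqcap g=\bm{1}_{\{a_0\}}$ (the (O6) computation together with Lemma~\ref{xin-lemma}); these disagree at $a_0$. The delicate case is $\vartriangle=\wedge$, where this trick collapses since $a_0\wedge a_0=a_0$. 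There I would instead use the non-increasing step function $f=g$ equal to $1$ on $[0,\tfrac12]$ and to $\tfrac12$ on $(\tfrac12,1]$, which lies in $\mathbf{L}\setminus\{\bm{1}_{\{1\}}\}$; since $f(1)\wedge g(1)=\tfrac12<1$ we have $(f\wustar g)(x)=(f\sqcap f)(x)=f(x)$ for $x\in[0,1)$ and $(f\wustar g)(1)=0$. With $\vartriangle=\wedge$, the constraint $y\wedge z=x$ for $x\in(\tfrac12,1)$ pins $y,z\geq x>\tfrac12$ into the flat piece, giving $(f\curlywedge g)(x)=\tfrac12\ast\tfrac12$, and likewise $(f\curlywedge g)(1)=\tfrac12\ast\tfrac12$.

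The closing step is a two-way comparison: the single number $\tfrac12\ast\tfrac12$ cannot equal both $\tfrac12$ (the value of $f\wustar g$ on $(\tfrac12,1)$) and $0$ (its value at $1$). If $\tfrac12\ast\tfrac12\neq\tfrac12$ the operations already differ on $(\tfrac12,1)$; if $\tfrac12\ast\tfrac12=\tfrac12$ they differ at $x=1$, where $f\curlywedge g=\tfrac12$ but $f\wustar g=0$. Either way $f\wustar g\neq f\curlywedge g$, which gives the theorem. I expect the $\vartriangle=\wedge$ case to be the real obstacle: the ``$a_0\vartriangle a_0<a_0$'' mechanism is gone, and one must instead exploit that a $\wedge$-convolution near $1$ only sees the values of $f,g$ near $1$ --- this is why the flat piece on $(\tfrac12,1]$ is effective, with the interval comparison absorbing drastic-product-type behaviour ($\tfrac12\ast\tfrac12=0$) and the endpoint comparison absorbing idempotent-at-$\tfrac12$ behaviour.
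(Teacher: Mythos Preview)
Your argument is correct, but the paper's proof is considerably simpler and avoids the case split on $\vartriangle$. The paper argues by contradiction: assume $\wustar=\curlywedge$ on $\mathbf{L}^2$, invoke Theorem~\ref{WC-T-Norm} to get that $\ast$ is a $t$-norm, and then take the single pair $f=\bm{1}_{[0,1]}$ and $g$ a tent with $g(1)=0.5$. Since $y\vartriangle z=1$ forces $y=z=1$ for any $t$-norm $\vartriangle$, one has $(f\curlywedge g)(1)=f(1)\ast g(1)=1\ast 0.5=0.5$ by neutrality of $1$, whereas $(f\wustar g)(1)=0$ because $f(1)\wedge g(1)<1$. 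That is the whole proof.

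The key device you are missing is the choice $f(1)=1$: this lets the neutral-element axiom of $\ast$ compute $(f\curlywedge g)(1)$ exactly, uniformly in both $\ast$ and $\vartriangle$, so no information about $\tfrac12\ast\tfrac12$ or about idempotents of $\vartriangle$ is ever needed. Your split into $\vartriangle\neq\wedge$ versus $\vartriangle=\wedge$, and the two-point comparison at $x\in(\tfrac12,1)$ and $x=1$, are valid workarounds but unnecessary once this observation is made. On the other hand, your $\vartriangle\neq\wedge$ branch does give a little extra: it shows that for non-minimum $\vartriangle$ the operations already disagree on $\mathbf{J}$, which the paper's example does not detect.
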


\begin{proof}
Suppose, on the contrary, that there exist a binary operation $\ast$ on $I$ and a $t$-norm $\vartriangle$ on $I$
such that, for $f, g\in \mathbf{L}$, one has $f\wustar g=f\curlywedge g$. Applying Theorem~\ref{WC-T-Norm},
yields that $\ast$ is a $t$-norm on $I$.
Choose $f=\bm{1}_{[0, 1]}$ and
$$
g(x)=
\begin{cases}
2x, & x\in [0, 0.5], \\
-x+1.5, & x\in (0.5, 1].
\end{cases}
$$
Clearly, $f, g\in \mathbf{L}$. From Definition~\ref{bingstar-operation-2}, it is easy to see that
$(f\wustar g)(1)=0$, since $f(1)\wedge g(1)<1$. This, together Theorem~\ref{t{lor}-norm-theorem}
and Proposition~\ref{1-Lemma}, implies that
$$
(f\curlywedge g)(1)=f(1)\ast g(1)=1\ast 0.5=0.5\neq (f\wustar g)(1),
$$
which contradicts with $f\wustar g=f\curlywedge g$.
\end{proof}

\begin{remark}
Combining Theorems \ref{t{lor}-norm-theorem} and \ref{Thm-30} negatively answers Question~\ref{Q-2}.
\end{remark}

\section{Conclusion}
Continuing our study in \cite{WC-TFS-2,WC-T-Norm}, this paper constructs two binary operations $\circledast$ and $\wustar$
on $I^{[2]}$ and $\mathbf{L}$, respectively (see Definitions \ref{xinxing-operation} and \ref{bingstar-operation-2}), and proves that
\begin{enumerate}[(i)]
  \item the binary operation $\circledast$ satisfies conditions (4) and (5) in Definition~\ref{Def-2}, but
  does not satisfy condition (5$^{\prime}$);
  \item the binary operation $\wustar$ is a $t_{lor}$-norm on $\mathbf{L}$, but not the convolution of any $t$-norms on $I$.
\end{enumerate}
These two results negatively answer Questions \ref{Q-1} and \ref{Q-2}
originally posed by Walker and Walker in \cite{WW2006}.

\section*{References}


\begin{thebibliography}{10}
\bibitem{GWW1996}
M.~Gehrke, C.~L. Walker, and E.~A. Walker.
\newblock Some comments on interval valued fuzzy sets.
\newblock {\em Internat. J. Intell. Systems}, 11:751--759, 1996.

\bibitem{HWW2007}
J.~Harding, C.~L. Walker, and E.~A. Walker.
\newblock On complete sublattices of the algebra of truth values of type-2
  fuzzy sets.
\newblock In {\em 2007 IEEE Int. Fuzzy Syst. Conf.}, pages 1--5. IEEE, 2007.

\bibitem{HWW2008}
J.~Harding, C.~L. Walker, and E.~A. Walker.
\newblock Lattices of convex normal functions.
\newblock {\em Fuzzy Sets Syst.}, 159:1061--1071, 2008.

\bibitem{HWW2010}
J.~Harding, C.~L. Walker, and E.~A. Walker.
\newblock Convex normal functions revisited.
\newblock {\em Fuzzy Sets Syst.}, 161:1343--1349, 2010.

\bibitem{HWW2016}
J.~Harding, C.~L. Walker, and E.~A. Walker.
\newblock {\em The {T}ruth {V}alue {A}lgebra of {T}ype-2 {F}uzzy {S}ets:
  {O}rder {C}onvolutions of {F}unctions on the {U}nit {I}nterval}.
\newblock CRC Press, 2016.

\bibitem{HCT2015}
P.~Hern\'{a}ndez, S.~Cubillo, and C.~Torres-Blanc.
\newblock On $t$-norms for type-2 fuzzy sets.
\newblock {\em IEEE Trans. Fuzzy Syst.}, 23:1155--1163, 2015.

\bibitem{KMP2000}
E.~P. Klement, R.~Mesiar, and E.~Pap.
\newblock {\em Triangular {N}orms}, volume~8 of {\em Trends in Logic}.
\newblock Springer Netherlands, 2000.

\bibitem{M2007}
J.~M. Mendel.
\newblock Advances in type-2 fuzzy sets and systems.
\newblock {\em Inf. Sci.}, 177(1):84--110, 2007.

\bibitem{M2001}
J.~M. Mendel.
\newblock Uncertain {R}ule-{B}ased {F}uzzy {S}ystems.
\newblock In {\em Introduction and {N}ew {D}irections}. Springer; 2nd ed.,
  2017.

\bibitem{MJ2002}
J.~M. Mendel and R.~I.~B. John.
\newblock Type-2 fuzzy sets made simple.
\newblock {\em IEEE Trans. Fuzzy Syst.}, 10(2):117--127, 2002.

\bibitem{Me1942}
K.~Menger.
\newblock Statistical metrics.
\newblock {\em Proc. Nat. Acad. Sci. USA}, 37:535--537, 1942.

\bibitem{MT1976}
M.~Mizumoto and K.~Tanaka.
\newblock Some properties of fuzzy sets of type-2.
\newblock {\em Inf. Control}, 31:312--340, 1976.

\bibitem{SS1961}
B.~Schweizer and A.~Sklar.
\newblock Associative functions and statistical triangle inequalities.
\newblock {\em Publ. Math.}, 8:169--186, 1961.

\bibitem{WW2005}
C.~L. Walker and E.~A. Walker.
\newblock The algebra of fuzzy truth values.
\newblock {\em Fuzzy Sets Syst.}, 149:309--347, 2005.

\bibitem{WW2006}
C.~L. Walker and E.~A. Walker.
\newblock T-norms for type-2 fuzzy sets.
\newblock In {\em 2006 IEEE Int. Conf. Fuzzy Syst.}, pages 1235--1239. IEEE,
  2006.

\bibitem{WC-TFS-2}
X.~Wu and G.~Chen.
\newblock On the existence of $t_r$-norm and $t_r$-conorm not in convolution
  form.
\newblock {\em submitted to IEEE Trans. Fuzzy Syst.}
\newblock \url{https://arxiv.org/abs/1908.10532}.

\bibitem{WC-T-Norm}
X.~Wu and G.~Chen.
\newblock Revisiting $t$-norms for type-2 fuzzy sets.
\newblock \url{https://arxiv.org/abs/2003.11953}.

\bibitem{WC-TFS}
X.~Wu and G.~Chen.
\newblock Answering an open problem on $t$-norms for type-2 fuzzy sets.
\newblock {\em Inf. Sci.}, 522:124--133, 2020.

\bibitem{Z1965}
L.~A. Zadeh.
\newblock Fuzzy sets.
\newblock {\em Inf. Control}, 20:301--312, 1965.

\bibitem{Z1975}
L.~A. Zadeh.
\newblock The concept of a linguistic variable and its application to
  approximate reasoning-{II}.
\newblock {\em Inf. Sci.}, 8(4):301--357, 1975.

\end{thebibliography}
\end{document}